\journalname{}
\begin{document}
\title{Exploiting sparsity for the min $k$-partition problem}
\author{Guanglei Wang \and Hassan Hijazi}

\institute{G. Wang \and H. Hijazi\at The Australian National University, 
 ACTON 2601, Canberra, Australia. \\
\email{guanglei.wang@anu.edu.au}
 \and
 H. Hijazi \at 
 Los Alamos National Laboratory,
 New Mexico, USA.\\
 \email{hlh@lanl.gov}
 }

\date{Received: date / Accepted: date}

\maketitle

\begin{abstract}
The minimum $k$-partition problem is a challenging combinatorial problem with a diverse set of applications ranging from telecommunications to sports scheduling. It generalizes the max-cut problem and has been extensively studied since the late sixties. Strong integer formulations proposed in the literature suffer from a prohibitive number of valid inequalities and integer variables. In this work, we introduce two compact integer linear and semidefinite reformulations that exploit the sparsity of the underlying graph and develop fundamental results leveraging the power of chordal decomposition. Numerical experiments show that the new formulations improve upon state-of-the-art.

\keywords{Integer programming \and Minimum $k$ partition \and Semidefinite programming \and Chordal graph}

\end{abstract}

\section{Introduction}
Given an undirected, weighted and connected graph $G=(V,E)$, the minimum $k$-partition (M$k$P) problem consists of partitioning $V$ into at most $k$ disjoint subsets, minimizing the total weight of the edges joining vertices in the same partitions. This problem has been shown to be strongly $\mathcal{NP}$-hard \cite{michael1979:computers}. M$k$P was firstly defined in~\cite{carlson1966:scheduling} in the context of scheduling where activities must be assigned to a limited number of facilities. The authors formulate the problem as a quadratic program and suggest a method returning local minima.
 
 
The complement of M$k$P is a max-$k$-cut problem where one is seeking to maximize the sum of weights corresponding to intra-partition edges. Related investigations can be found in~\cite{deza1990:complete,deza1992:clique,frieze1997:improved}. The well-known integer linear programming formulation of max-$k$-cut can be obtained by complementing the variables in the integer formulation of M$k$P. Thus solving max-$k$-cut is equivalent to solving M$k$P problem. 
{When $k = 2$, the minimum $2$-partition is equivalent to the max-cut problem, which is~$\mathcal{NP}$-hard and has been extensively studied in the literature (see e.g.,~\cite{barahona1986cut,goemans1995,deza1997geometry}).
Hence, in the general case, both M$k$P and max-$k$-cut problems are $\mathcal{NP}$-hard (see also~\cite{chopra1993:partition,papadimitriou1988}). 
However, they have different approximability results. It has been shown in~\cite{frieze1997:improved} that the max-$k$-cut can be solved $(1-k^{-1})$-approximately in polynomial time. 
But the M$k$P problem is not $\bigO(\abs{E})$-approximable in polynomial time unless $\mathcal{P= NP}$~\cite{eis2002:sdp}.}

{In~\cite{chopra1993:partition}, Chopra and Rao introduce two Integer Linear Programming (ILP) formulations for the M$k$P. The first one uses node and edge variables associated with the graph $G$.  
The second one uses edge variables but requires completing the original graph $G$ with missing edges.
To strengthen the formulations, several families of strong valid linear inequalities have been  proposed. 
We refer readers to~\cite{chopra1993:partition,chopra1995:mink,deza1990:complete,fairbrother2016:projection} for references.}

{\bf Contributions:} In this paper, we propose two compact reformulations of the
M$k$P problem. One is an ILP formulation based on Chopra and
Rao's~\cite{chopra1993:partition} edge-based model. The other is an ISDP
formulation based on the Eisenbl{\"a}tter's~\cite{eis2002:sdp} model. Both
models exploit the sparsity of the graph and thus have less integer variables
than their counterparts. 
{We also show that if $G$ is chordal, our proposed ILP formulation has $\abs{E}$ variables (according to \cite{chopra1993:partition}, no existing formulations live in that space)}. To validate the proposed formulations, several theoretical results are established. The computational experiments show that the proposed formulations are able to improve computational efficiency by several orders of magnitude. 

{\bf Outline:} The rest of the paper is organized as follows. In Section~\ref{sec:review}, the literature on the existing formulations of M$k$P is reviewed. 
{Applications of chordal graphs in compact reformulations are also discussed.} 
In Section~\ref{sec:theory}, we provide our compact formulations which are based on the maximal clique set. Some fundamental results are established. 
In Section~\ref{sec:test}, we describe some computational experiments and analyse the results. Finally, some concluding remarks are made in Section~\ref{sec:conclude}.

 {\bf Notation:} For a finite set $S$, $\abs{S}$ represents its cardinality. $\realS^n$ represents the set of real symmetric matrices in $\real^{n\times n}$. 
 $\realS^n_+$ denotes the cone of real positive semidefinite matrices in $\realS^n$, i.e., $\realS^n_+ = \bigcurly{S \in \realS^n: S \succeq 0}$. 
For a matrix $M$, $M_{C}$ represents the principle sub-matrix composed by set $C$ of columns and set $C$ of rows of $M$. We also use
$(C, D)$ to denote the matrix where $C$ and $D$ are concatenated by rows assuming
they have the same number of columns. The inner product between two matrices $A,B \in \real^{m×n}$ is denoted by $\scp{A}{B} = \sum\limits_{i =1}^m\sum\limits_{j=1}^n A_{ij} B_{ij}$.

\section{Literature Review} \label{sec:review}
In this section, we review some existing formulations of the M$k$P problem. 
To facilitate the development of our main results, we also review some basic graph terminology. 
\subsection{Formulations of M$k$P }
Following~\cite{chopra1993:partition,chopra1995:mink}, the M$k$P problem can be expressed in Model~\ref{model:kparties}.
\begin{model}[!h]
\caption{The edge formulation of the M$k$P problem}
\label{model:kparties}
\begin{subequations}
\begin{align}
\mbox{\bf variables:} \; \;& x \in \bigcurly{0, 1}^{\frac{n (n-1)}{2}} \label{kpart:binary}\\
\mbox{\bf minimize:} \;\; & \sum\limits_{\{i, j\} \in E} w_{ij}x_{ij} \label{kpart:obj}\\
 \mbox{\bf s.t.:} \;\;& x_{ih} + x_{hj} - x_{ij} \le 1, \; \forall i, j, h \in V, ~i<h<j\label{kpart:triangle} \\
 \;& \sum\limits_{i, j \in Q: i<j} x_{ij} \ge 1, \; \forall Q \subseteq V, \mathrm{where} ~\abs{Q} = k+1 \label{kpart:clique} 
\end{align}
\end{subequations}
\end{model}
Binary variable $x_{ij}$ is $1$ if and only if $i$ and $j$ are in the same partition, $0$ otherwise. 
The \emph{triangle} constraint~\eqref{kpart:triangle} enforces consistency with respect to partition membership. 
Namely, if $x_{ih}=1$ and $x_{jh}=1$, indicating that $i$, $h$ and $j$ are in
the same partition, this implies $x_{ij} = 1$. For every subset of $k + 1$
vertices, the \emph{clique} constraint~\eqref{kpart:clique} forces at least two
vertices to be in the same partition. Together with
constraints~\eqref{kpart:triangle}, this implies that there are at most $k$
partitions. In total, there are $3{\abs{V} \choose 3}$ triangle inequalities and
$\abs{V} \choose {k+1} $ clique inequalities. We refer to this formulation as the edge formulation. 

When $3 \le k \le \abs{V}-1$, we denote by $P^k$ the set of feasible solutions of Model~\ref{model:kparties} as:
 \begin{align}\label{eq:polytope}
P^k = \bigcurly{x \in \bigcurly{0, 1}^{V^2}: x \; \textrm{satisfies}~ \eqref{kpart:triangle} \;\textrm{and} \;\eqref{kpart:clique}},
\end{align}
where $V^2 := \bigcurly{\{i, j\} \in V \times V: i < j}$ represents the edges of the complete graph induced by vertices $V$. It is known that the convex hull of $P^k$ is a polytope \cite{chopra1993:partition}. Moreover, it is fully-dimensional in the space spanned by the edge variables~\cite{chopra1995:mink,deza1997geometry}. 

In~\cite{chopra1993:partition}, Chopra and Rao give several valid and facet defining inequalities for $P_k$. These include~\emph{general clique, wheel} and~\emph{bicycle} inequalities. 
Among others, we remark that the triangle~\eqref{kpart:triangle} and clique inequalities~\eqref{kpart:clique} can make the continuous relaxation of Model~\ref{model:kparties} hard to deal with. 
In fact, it has been noticed that the exact separation of the clique inequalities is $\mathcal{NP}$-hard in general, and the complete enumeration is intractable even for small values of $k$~\cite{ghaddar2011:branch}.
%

Chopra and Rao~\cite{chopra1993:partition} also propose an alternative ILP formulation, presented in Model~\ref{model:node-edge}. It has $kn + m $ variables, where $n$ and $m$ are respective numbers of nodes and edges of graph $G$. Binary variable $x_{ic}$ is $1$ if node $i$ lies in the $c$-th subset. Binary variable $y_{ij}$ is $1$ if $i$ and $j$ are in the same subset. {Some subsets can be empty.}
Constraints~\eqref{node-edge:assign} express that each node must be assigned to exactly one subset. 
Constraints~\eqref{node-edge:cut} indicates that if $\{i, j\}$ is
an edge and $i$ and $j$ are in the same subset then the edge $\{i, j\}$ is not cut by the partition (i.e., $y_{ij} = 1$). 
 Conversely, if nodes $i, j$ are in two disjoint subsets,~\eqref{node-edge:cut2} and~\eqref{node-edge:cut3} enforce that the edge $\{i, j\}$ is cut by the partition (i.e., $y_{ij} = 0$). 
We refer to Model~\ref{model:node-edge} as the node-edge formulation. 
Some valid inequalities have been proposed in the literature to strengthen Model~\ref{model:node-edge} (see, e.g., ~\cite{chopra1993:partition,fairbrother2016:projection}). 
\begin{model}[!h]
\caption{The node-edge formulation of the M$k$P problem}
\label{model:node-edge}
\begin{subequations}
\begin{align}
 \mbox{\bf variables:} \; \;& x \in \bigcurly{0, 1}^{kn}, y \in \bigcurly{0, 1}^{m} \label{node-edge:binary}\\
\mbox{\bf minimize:} \;\; & \sum\limits_{\{i, j\} \in E} w_{ij}y_{ij} \label{node-edge:obj}\\
 \mbox{\bf s.t.:} \;\;& \sum\limits_{c=1}^k x_{ic} = 1 \; i \in V \label{node-edge:assign} \\
 \;\;& x_{ic} + x_{jc} - y_{ij} \le 1 \;\; \{i, j\} \in E, c = 1, \dots, k \label{node-edge:cut}\\
 \;\;& x_{ic} - x_{jc}+ y_{ij} \le 1\; \; \{i, j\} \in E, c = 1, \dots, k
 \label{node-edge:cut2}\\
 \;\;&- x_{ic} + x_{jc} + y_{ij}\le 1 \; \; \{i, j\} \in E, c = 1, \dots, k.
 \label{node-edge:cut3}
\end{align}
\end{subequations}
\end{model}
%

Eisenbl{\"a}tter~\cite{eis2002:sdp} provides an exact ISDP reformulation
of Model~\ref{model:kparties}. We introduce variables $X_{ij} \in
\bigcurly{\frac{-1}{k-1}, 1}$, where $X_{ij} =1$ if and only if node $i$ and $j$
are in the same partition. This formulation is presented in Model~\ref{model:SDP1}.
\begin{model}[!h]
\caption{The integer SDP formulation of the M$k$P problem}
\label{model:SDP1}
\begin{subequations}
\begin{align}
 \mbox{\bf variables:} \; \;& X_{ij} \in \bigcurly{\frac{-1}{k-1}, 1} \;
 \;(\forall i, j \in V: i< j). \\
\min\;\; & \sum\limits_{\{i, j\} \in E} w_{ij} \frac{(k-1)X_{ij} +1}{k} \label{SDP1:obj}\\
 \subto \;\;& X \in \realS^n_+ \label{SDP1:1}\\
 		& X_{ii} = 1, \; \;\forall i \in V. \label{SDP1:2}
\end{align}
\end{subequations}
\end{model}

Replacing the constraint $X_{ij} \in \bigcurly{\frac{-1}{k-1}, 1}$ with
$\frac{-1}{k-1} \le X_{ij} \le 1$ yields a Semidefinite Programming (SDP) relaxation. Notice that $X_{ij} \le1$ can
be dropped since it is implicitly enforced by $X$ being positive
semidefinite and $X_{ii} = 1$ ({by considering the $2\times2$ minors}). This SDP relaxation was used in combination with
randomized rounding to obtain a polynomial time approximation algorithm for the
max $k$-cut problem by Freize and Jerrum~\cite{frieze1997:improved}. 
As proposed by~Eisenbl{\"a}tter~\cite{eis2002:sdp}, one can recast Model~\ref{model:SDP1} using binary variables. Indeed, we introduce binary variables $Y_{ij}$ such that 
\begin{align}\label{eq:mapping}
X_{ij} = \frac{-1}{k-1} + \frac{k}{k-1} Y_{ij}. 
\end{align}
Thus, $X_{ij} = \frac{-1}{k-1}$ if $Y_{ij} = 0$ and $X_{ij} = 1$ if $Y_{ij}=1$. This leads to 
 \begin{subequations}\label{model:SDP2}
 \begin{align}
\min\;\; & \sum\limits_{\{i, j\} \in E} w_{ij} Y_{ij} \notag \\
 \subto \;\;& \frac{-1}{k-1}J + \frac{k}{k-1} Y \in \realS^n_+, \label{SDP2:1}\\
 		& Y_{ii} = 1, \; \; \; \;\forall i \in V, \label{SDP2:2}\\
		& Y_{ij} \in \bigcurly{0, 1} \; \;\forall i, j \in V: i< j. \label{SDP2:3}
\end{align}
\end{subequations}
where $J$ is the all-one matrix. Note that the mapping function~\eqref{eq:mapping} is bijective or one-to-one. This bijective relationship will be used in Section~\ref{sec:extension}.

Several SDP based branch-and-bound frameworks for M$k$P problem (or max-$k$-cut) have appeared in the literature~\cite{ghaddar2011:branch,de2016:valid}. Typically, these approaches identify several families of valid inequalities to tighten the SDP relaxation during the solution procedure. However, solving large scale SDP problems is less efficient than their LP counterparts~\cite{wang2016:thesis,wang2017:envelope}. 

\subsection{Graph terminology}
We shall assume familiarity with basic definitions from graph theory. We will also use results from the algorithmic graph theory for chordal graphs and we refer readers to~\cite{Golumbic1980:chordal,blair1993:chordal} for relevant references. We introduce some of the graph notation and terminology that will be used in this article. 

Let $G=(V, E)$ be an~\emph{undirected graph} formed by vertex set $V$ and edge set $V$. The number of vertices is denoted by $n = \abs{V}$ and the number of edges by $m = \abs{E}$.
A graph is called \emph{complete} if every pair of vertices are adjacent. A \emph{clique} of a graph is an induced subgraph which is complete, and a clique is \emph{maximal} if its vertices do not constitute a proper subset of another clique. {A sequence of nodes $\{v_0, v_1, v_2, \dots, v_k, v_0\}\subseteq V$ is a \emph{cycle} of length $k+1$ if $\{v_{i-1}, v_i\} \in E$ for all $i \in \{1, \dots, k\}$ and $\{v_k, v_0\} \in E$.}

A graph is said to be \emph{chordal} if every cycle of length greater than or equal to $4$ has a \emph{chord} (an edge joining two nonconsecutive vertices of the cycle). 
Given a graph $G=(V, E)$, we say that a graph $G_F = (V, F)$ is a \emph{chordal extension} of $G$ if $G_F$ is chordal and $E \subseteq F$. Throughout this article, we also assume that the graph $G$ is connected. 
\subsection{{Chordal graphs and compact reformulations}}
{Investigations on chordal graph were initiated by~\cite{fulkerson1965:chordal,berge1966} for the characterisation of perfect graphs. 
Chordal graphs have a wide range of applications in combinatorial optimization~\cite{gavril1972,arnborg1989}, matrix completion~\cite{grone1984:psd,laurent1998:matrix} 
and more recently in sparse semidefinite programming~\cite{fukuda2001:exploit,nakata2003,waki2006:sos,kim2011:sparsity}. 
Comprehensive reviews on the fundamental theory and applications are given by Blair and Peyton~\cite{blair1993:chordal} and Vandenberghe and Andersen~\cite{vandenberghe2015}. 
%
}

{To the best of our knowledge, research on mathematical reformulations using chordal graphs started with Fulkerson and Gross~\cite{fulkerson1965:chordal}, where
the authors establish connections between interval graphs (a special case of chordal graph) and matrix total unimodularity. Later, the authors in~\cite{fukuda2001:exploit,nakata2003} 
introduce chordal graph techniques to accelerate the solution procedure of Interior Point Methods (IPMs) for SDPs by reformulating the underlying large matrix with an equivalent set of smaller matrices.  The reformulation is based on a clique decomposition of a chordal extension of the \emph{aggregated sparsity pattern graph}. 
This approach accelerates the computation of the solution (as a search direction in IPMs)  of the Schur complement equation, thereby speeding up
the solution procedure of IPMs. The reformulation method is often called conversion method or clique decomposition method. 
Chordal graphs are also used to exploit the \emph{correlative sparsity pattern} in the context of polynomial optimization
for deriving hierarchies of SDP relaxations (see~\cite{waki2006:sos,lasserre2006}). }

{More recently, Bienstock and Ozbay~\cite{Bienstock2004:tree} establish a connection between the well-known Sherali-Adam reformulation operator for 0-1 integer programs and chordal extensions. Later, the authors in~\cite{bienstock2015} show that polynomial-size LP reformulations can be constructed for certain 
classes of mixed-integer polynomial optimization problems by exploiting structured sparsity. }

{To distinguish and relate the results developed in this paper from the mentioned techniques, we remark the following:
 (1) our main results are ILP (Model~\ref{model:reduced}) and ISDP (Model~\ref{model:SDP1reduce}) reformulations, which is different from LP or SDP reformulations discussed above;
 (2) in addition to the known choral graph properties in the literature, our reformulations rely on the proposed fundamental results (Theorems~\ref{thm:proj} and~\ref{prop:sdpreduce});
 (3) to the best of our knowledge, the fundamental results  are novel in the sense that they are neither mentioned nor implied by any results in the literature. 
 Despite these differences, it is interesting to note that our reformulation is also related to the conversion method proposed in~\cite{fukuda2001:exploit,nakata2003}.  
 As will be shown later, both reformulations rely on a clique decomposition of the underlying graph, though with different purposes.}


\section{Main results} \label{sec:theory}

In this section, we propose two compact reformulations of the M$k$P problem by exploiting the structured sparsity of the underlying graph.
Let $\K$ be the set of all maximal cliques in $G_F$, representing the chordal extension of $G$, i.e., $\K =\bigcurly{C_1, \dots, C_l},~ C_i \subseteq V ~\forall i\in\{1,\dots,l\},$ such that 
$F = \bigcup_{r=1}^l C_r \times C_r \supseteq E$.
We will next show that the following property holds:
\begin{property}[Completion Property]
\label{prop:completion} 
For any vector $x_F \in \real^{\abs{F}}_+$, if $x_F$ satisfies 
\begin{subequations}
\begin{align}
& x_{ih} + x_{hj} - x_{ij} \le 1, \; \forall i, j, h \in C_r, r =1, \dots, l, \label{eq:maxclique1} \\
 & \sum\limits_{i, j \in Q} x_{ij} \ge 1, \; \forall Q \subseteq C_r, \mathrm{where} ~\abs{Q} = k+1, r = 1, \dots, l, \label{eq:maxclique2}\\
 & x_F \in \bigcurly{0, 1}^{\abs{F}}, \label{eq:maxclique3}
\end{align}
\end{subequations}
\end{property}
then there exists $x_T \in \bigcurly{0, 1}^{\abs{V^2 \setminus F}}$ such that $x= (x_F, x_T)$ satisfies \eqref{kpart:triangle} and \eqref{kpart:clique}. 

Under the conditions above, we observe that the value of the objective~\eqref{kpart:obj} can be determined by values of entries $x_F$ and independently of values in $x_T$. 
In addition, the resulting formulation has $\sum\limits_{r=1:\abs{C_r} \ge 3}^l 3{\abs{C_r} \choose 3} + \sum\limits_{r=1: \abs{C_r} \ge k+1}^l {\abs{C_r} \choose k+1}$ constraints and ${\abs {F}}$ binary variables. Thus it becomes more compact if this number is less than $3{\abs{V} \choose 3} + {\abs{V} \choose k+1} $. 



 \subsection{A compact ILP reformulation}
 Let $P^k_{F}$ represent the set of feasible solutions satisfying~\eqref{eq:maxclique1}, \eqref{eq:maxclique2}, \eqref{eq:maxclique3}, i.e., 
 \begin{align} \label{eq:feasibleset}
 P^k_{F} = \bigcurly{x \in \real^F_+: \eqref{eq:maxclique1}, \eqref{eq:maxclique2}, \eqref{eq:maxclique3}}. 
 \end{align}
We first show that the $\abs{F}$ components corresponding to index set $F$ in $x \in P^k$ represent a member of $P^k_F$. We denote by $\proj_{F} S$ the projection of $S$ into the space defined by the components in $F$, i.e., for a given set $S\subset \real^{I}, F \subset I \subseteq V^2$,
\begin{align}\label{operation:proj}
\proj_{F} S = \bigcurly{x_F \in \real^F: \exists x_T \in \real^{I\setminus F}, (x_F, x_T) \in S}
\end{align}
\begin{lemma}\label{lemma:easy}
 $\forall k,~2\le k \le n$, $\proj_{F} P^k \subseteq P^k_F$
\end{lemma}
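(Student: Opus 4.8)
The plan is to prove the inclusion coordinate-by-coordinate, exploiting the fact that the inequalities defining $P^k_F$ form a subfamily of those defining $P^k$; no completion argument is needed in this direction. First I would take an arbitrary $x_F \in \proj_F P^k$ and, by the definition~\eqref{operation:proj} of the projection operator applied to $S = P^k \subset \real^{V^2}$, fix a companion vector $x_T \in \real^{V^2\setminus F}$ such that the full vector $x = (x_F, x_T)$ lies in $P^k$. It then suffices to verify that this $x_F$ satisfies each of the three defining conditions~\eqref{eq:maxclique1},~\eqref{eq:maxclique2} and~\eqref{eq:maxclique3}.

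Since $x \in P^k \subseteq \{0,1\}^{V^2}$, every coordinate of $x$, and in particular every coordinate of its restriction $x_F$, lies in $\{0,1\}$; this immediately yields~\eqref{eq:maxclique3} and hence $x_F \in \real^F_+$. For the triangle constraints~\eqref{eq:maxclique1}, I would fix a maximal clique $C_r \in \K$ and a triple $i,h,j \in C_r$. Because $C_r \subseteq V$, the inequality $x_{ih}+x_{hj}-x_{ij}\le 1$ occurs among the triangle inequalities~\eqref{kpart:triangle} for the triple $\{i,h,j\}\subseteq V$, which $x$ already satisfies. The point to record is that the three pairs $\{i,h\},\{h,j\},\{i,j\}$ all lie in $C_r\times C_r\subseteq F$, so every variable appearing in this inequality is a coordinate of $x_F$ rather than of $x_T$; the inequality is therefore a statement about $x_F$ alone and is inherited under the projection. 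The clique constraints~\eqref{eq:maxclique2} are handled identically: for any $(k+1)$-subset $Q \subseteq C_r$, the inequality $\sum_{i,j\in Q} x_{ij}\ge 1$ is one of the clique inequalities~\eqref{kpart:clique}, and every pair $\{i,j\}$ with $i,j \in Q \subseteq C_r$ satisfies $\{i,j\}\in C_r\times C_r\subseteq F$, so it again depends only on $x_F$ and is satisfied.

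Having verified all three families for $x_F$, I conclude $x_F \in P^k_F$, and since $x_F$ was an arbitrary element of $\proj_F P^k$ this establishes $\proj_F P^k \subseteq P^k_F$. I do not expect a genuine obstacle here: the whole content is the containment of constraint families, together with the bookkeeping observation that every coordinate occurring in~\eqref{eq:maxclique1} and~\eqref{eq:maxclique2} is indexed by a pair lying inside some $C_r\times C_r$, hence inside $F$, so the restricted constraints never reference the discarded coordinates $x_T$. The substantive direction, which requires explicitly completing a given $x_F$ to a feasible $x = (x_F, x_T) \in P^k$, is precisely the role of the Completion Property and is where chordality of $G_F$ enters; that is deferred and plays no part in the present inclusion.
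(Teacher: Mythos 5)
Your proof is correct and follows exactly the route the paper takes: the paper's proof simply asserts that the entries of $\bar x_F$ "satisfy the constraints in $P^k_F$", and your write-up is the explicit verification of that claim (the defining inequalities of $P^k_F$ are a subfamily of those of $P^k$, and every coordinate they reference lies in some $C_r \times C_r \subseteq F$).
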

\begin{proof}
Let $\bar x$ be an arbitrary point in $P^k$. It is easy to verify that entries of $\bar x_F$ satisfy constraints in $P^k_{F}$ and therefore $\proj_{F} P^k \subseteq P^k_{F} $. \qed
\end{proof}
\begin{lemma}[Lemma 3,~\cite{grone1984:psd}] \label{lemma:clique}
Given that $G_F$ is chordal, for any pair of vertices $u$ and $v$ with $u \ne v, \bigcurly{u, v} \not \in E$, the graph $G_F + \bigcurly{u, v}$ has a unique maximal clique which contains both $u$ and $v$. 
\end{lemma}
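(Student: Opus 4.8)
The plan is to exhibit the unique maximal clique explicitly as $\bigcurly{u,v} \cup (N(u) \cap N(v))$, where $N(w)$ denotes the set of neighbours of $w$ in $G_F$, and then to argue both that this set is a clique of $G_F + \bigcurly{u,v}$ and that it absorbs every clique containing the two distinguished vertices.

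First I would characterise the cliques of $G_F + \bigcurly{u,v}$ that contain both $u$ and $v$. Since the only edge present in $G_F + \bigcurly{u,v}$ but absent from $G_F$ is $\bigcurly{u,v}$ itself, any such clique has the form $\bigcurly{u,v} \cup W$ where every vertex of $W$ is adjacent in $G_F$ to both $u$ and $v$, and $W$ induces a complete subgraph of $G_F$. Equivalently, $W \subseteq N(u) \cap N(v)$ and $W$ is a clique of $G_F$. Consequently every clique of $G_F + \bigcurly{u,v}$ containing $u$ and $v$ is contained in $\bigcurly{u,v} \cup (N(u) \cap N(v))$.

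The crux---and the step I expect to carry the real weight---is to show that the common neighbourhood $N(u) \cap N(v)$ is itself a clique of $G_F$. This is where chordality enters. Suppose, for contradiction, that $w_1, w_2 \in N(u) \cap N(v)$ are distinct and non-adjacent in $G_F$. Then $u, w_1, v, w_2$ form a cycle of length $4$ in $G_F$, since the four edges $\bigcurly{u,w_1}, \bigcurly{w_1,v}, \bigcurly{v,w_2}, \bigcurly{w_2,u}$ all lie in $F$. By chordality this cycle must have a chord, and its only two possible chords are $\bigcurly{u,v}$ and $\bigcurly{w_1,w_2}$. The former is excluded by hypothesis, forcing $\bigcurly{w_1,w_2} \in F$, a contradiction. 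Hence any two common neighbours are adjacent, so $N(u) \cap N(v)$ is a clique of $G_F$.

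Finally I would assemble the pieces. Because $N(u) \cap N(v)$ is a clique of $G_F$ and each of its vertices is adjacent to both $u$ and $v$, the set $\bigcurly{u,v} \cup (N(u) \cap N(v))$ is a clique of $G_F + \bigcurly{u,v}$ containing both $u$ and $v$; by the characterisation of the first step it contains every clique of $G_F + \bigcurly{u,v}$ that contains both vertices. It is therefore simultaneously maximal among them and the unique clique with this property. The only genuine difficulty is the four-cycle chord argument; the remaining steps amount to bookkeeping on neighbourhoods.
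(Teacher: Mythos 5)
Your proof is correct. Note that the paper does not actually prove this lemma itself---it is imported as Lemma~3 of Grone et al.~\cite{grone1984:psd}---so the only in-paper comparison point is the proof of Proposition~\ref{prop:maximalclique}, and your argument is essentially that same argument: the explicit description $\{u,v\}\cup\left(N(u)\cap N(v)\right)$ combined with the four-cycle chord contradiction. Two minor points in your favour: you correctly read the hypothesis as $\{u,v\}\notin F$ (the paper's ``$\{u,v\}\notin E$'' must be a slip, since otherwise adding $\{u,v\}$ to $G_F$ could be vacuous), and your observation that the chord $\{u,v\}$ of the cycle $u,w_1,v,w_2$ is excluded precisely by this hypothesis is what lets the argument work even though $G_F+\{u,v\}$ need not itself be chordal.
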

\begin{lemma} [Lemma 4,~\cite{grone1984:psd}] \label{lemma:grone}
Given that $G_F$ is chordal, there exists a sequence of chordal graphs 
\begin{align}
& G_i = (V, F_i), \; i = 0, \dots, s,
\end{align}
such that $G_0= G_F$, $G_s$ is the complete graph, and $G_i$ is obtained by adding an edge to $G_{i-1}$ for all $i = 1,\dots, s$. 
\end{lemma}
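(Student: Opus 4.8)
The plan is to prove Lemma~\ref{lemma:grone} by induction on the number of non-adjacent pairs of the current graph, which reduces everything to the following one-step claim: if $H$ is chordal and not complete, then there is a non-adjacent pair $\{u,v\}$ such that $H + \{u,v\}$ is again chordal. Granting this claim I would build the sequence greedily: set $G_0 = G_F$ and, as long as $G_{i-1}$ is not complete, apply the claim to obtain a non-edge whose addition yields a chordal $G_i = (V, F_i)$. Since each step removes exactly one non-adjacent pair while keeping the vertex set and chordality, after finitely many steps (precisely $s = \binom{n}{2}$ minus the number of edges of $G_F$) the process terminates at $G_s$, the complete graph, with every intermediate $G_i$ chordal.

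The engine for the one-step claim is the classical characterization that a graph is chordal if and only if it admits a \emph{perfect elimination ordering} (PEO), i.e.\ an ordering $(v_1,\dots,v_n)$ of $V$ such that for every $h$ the later neighborhood $X_h := N(v_h) \cap \{v_{h+1},\dots,v_n\}$ is a clique (see~\cite{Golumbic1980:chordal}). The subtlety I would stress at the outset is that one cannot add an arbitrary non-edge: adding the edge $\{a,d\}$ to the path on four vertices $a,b,c,d$ creates a chordless $4$-cycle. The edge must therefore be chosen compatibly with the elimination structure.

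Accordingly, fix a PEO $(v_1,\dots,v_n)$ of $H$. Since $H$ is not complete, let $j$ be the largest index such that $v_j$ is non-adjacent to some earlier vertex, and let $i$ be the largest index below $j$ with $\{v_i,v_j\}\notin E$. I claim $\{v_i,v_j\}$ is the edge to add. The maximality of $j$ forces every vertex after $v_j$ to be adjacent to all earlier vertices, so $v_j$ is adjacent to all of $v_{j+1},\dots,v_n$; the maximality of $i$ forces $v_j$ to be adjacent to every $v_m$ with $i<m<j$. To finish I would verify that the same ordering remains a PEO of $H' = H + \{v_i,v_j\}$ by checking each $X_h$ is still a clique in $H'$. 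The only set that changes is $X_i$, which becomes $X_i \cup \{v_j\}$, and the two maximality properties just recorded are exactly what guarantee that $v_j$ is adjacent in $H'$ to every element of $X_i$; for $h \neq i$ one observes that $v_i$ and $v_j$ are never simultaneously contained in $X_h$ (otherwise $X_h$ being a clique would already force $\{v_i,v_j\}\in E$), so those sets are unaffected. Hence $H'$ has a PEO and is chordal, proving the claim.

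The main obstacle is the one-step claim, and within it the verification that $X_i \cup \{v_j\}$ remains a clique; this is precisely where the ``latest non-edge'' selection is indispensable, since the four-vertex path example shows that a naive choice destroys chordality. Everything else, namely the outer induction and the two maximality observations, is routine. I note that Lemma~\ref{lemma:clique} offers an alternative route, controlling the unique maximal clique created by the added edge, but the PEO bookkeeping above appears to be the most direct.
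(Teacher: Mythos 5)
Your argument is correct. Note that the paper does not actually prove this statement: it is imported verbatim as Lemma~4 of the cited reference \cite{grone1984:psd}, so there is no in-paper proof to compare against. Your proof is a legitimate self-contained derivation via perfect elimination orderings, and the key steps all check out: the reduction to the one-step claim (a non-complete chordal graph admits a non-edge whose addition preserves chordality); the choice of $j$ maximal with a missing earlier neighbour and $i$ maximal below $j$ with $\{v_i,v_j\}\notin E$; and the verification that only $X_i$ changes, becoming $X_i\cup\{v_j\}$, which is a clique precisely because every $v_m\in X_i$ has either $i<m<j$ (handled by maximality of $i$) or $m>j$ (handled by maximality of $j$). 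Your side remark that $v_i$ and $v_j$ never co-occur in $X_h$ for $h\neq i$ is not actually needed --- those sets are unchanged and adding edges cannot destroy a clique --- but it does no harm. You also correctly flag the essential subtlety ($P_4$ shows an arbitrary non-edge fails), which is the whole content of the lemma. An alternative route closer to the paper's other imported tool would be to pick a vertex $u$ whose closed neighbourhood is not all of $V$, choose $v$ at distance two from $u$ appropriately, and invoke the unique-maximal-clique property (the paper's Lemma~\ref{lemma:clique}) to control the new cycles created; your PEO bookkeeping buys a shorter and more mechanical verification, at the cost of invoking the PEO characterisation of chordality, which the paper never states but which is standard (see \cite{Golumbic1980:chordal}).
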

For sake of further development, let us represent the neighborhood of a vertex $u \in V$ as
$N_G(u) = \bigcurly{v \in V: \{u, v\} \in E}$.
%
\begin{proposition}\label{prop:maximalclique}
Given a chordal graph $G=(V, E)$, for every edge $\{u, v\} \in E$, the unique maximal clique $C$ containing both $u$ and $v$ is the union of $\bigcurly{u, v}$ and $N_{G} (u) \bigcap N_{G}(v)$, i.e., 
$C = \bigcurly{u, v} \bigcup \left (N_{G} (u) \bigcap N_{G}(v) \right )$. 
\end{proposition}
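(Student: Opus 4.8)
The plan is to establish the set identity $C = \{u,v\} \cup (N_G(u) \cap N_G(v))$ by proving the two inclusions separately, taking the uniqueness of $C$ (guaranteed by Lemma~\ref{lemma:clique}) as the pivot of the argument. I would first dispatch the inclusion $C \subseteq \{u,v\} \cup (N_G(u)\cap N_G(v))$, which needs nothing but the definition of a clique: if $w \in C$ and $w \neq u, v$, then $w$ is adjacent to every other vertex of $C$, in particular to $u$ and to $v$, so $w \in N_G(u) \cap N_G(v)$. Since $u,v \in \{u,v\}$ trivially, this yields the first inclusion.

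The reverse inclusion is where the real work lies. Fix any $w \in N_G(u) \cap N_G(v)$ with $w \notin \{u,v\}$; I must show $w \in C$. Since $w$ is adjacent to both $u$ and $v$, and $\{u,v\} \in E$, the three vertices $\{u,v,w\}$ induce a triangle and hence form a clique. This clique extends to some maximal clique $C'$, and $C'$ necessarily contains both $u$ and $v$. At this point I would invoke the uniqueness asserted in Lemma~\ref{lemma:clique}: the maximal clique containing the pair $u, v$ is unique, so $C' = C$, whence $w \in C$. Together with $\{u,v\} \subseteq C$, this gives $\{u,v\} \cup (N_G(u) \cap N_G(v)) \subseteq C$, completing the second inclusion and hence the identity.

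I expect the main obstacle to be securing the uniqueness that makes the triangle-extension step collapse $C'$ onto $C$; everything downstream is formal once uniqueness is in hand. If one prefers to argue uniqueness and clique-ness from scratch rather than quoting Lemma~\ref{lemma:clique}, the natural route is to show directly that $N_G(u) \cap N_G(v)$ is a clique using chordality: for any $w_1, w_2 \in N_G(u) \cap N_G(v)$ the four vertices $u, w_1, v, w_2$ form a cycle of length four, which must admit a chord. The delicate point — and the step I would guard most carefully — is ensuring the chord is $\{w_1, w_2\}$ rather than being supplied by the edge $\{u,v\}$; it is exactly this forcing that chordality, together with the structure underlying Lemma~\ref{lemma:clique}, is responsible for, and it is what pins $N_G(u) \cap N_G(v)$ down as a clique and thus the right-hand set as the unique maximal clique $C$.
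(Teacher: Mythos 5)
Your reduction of the identity to the \emph{uniqueness} of the maximal clique containing $\{u,v\}$ is clean, and the triangle-extension step is correct \emph{given} that uniqueness. The gap is in where the uniqueness comes from: Lemma~\ref{lemma:clique} is stated for a pair $u\neq v$ with $\{u,v\}\notin E$ and concerns the augmented graph $G_F+\{u,v\}$, so it does not apply to an arbitrary edge $\{u,v\}\in E$ of a chordal graph, which is the hypothesis of Proposition~\ref{prop:maximalclique}. Worse, uniqueness genuinely fails in that setting: take $K_4$ minus one edge, with the missing edge $\{a,b\}$ and with $u,v$ the two vertices of degree $3$. This graph is chordal (its only $4$-cycle $u,a,v,b$ has the chord $\{u,v\}$), yet $\{u,v,a\}$ and $\{u,v,b\}$ are two distinct maximal cliques containing the edge $\{u,v\}$, and $\{u,v\}\cup\left(N_G(u)\cap N_G(v)\right)=\{u,v,a,b\}$ is not a clique. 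So the proposition as literally stated is false, and no argument can close the ``delicate point'' you flagged: in the $4$-cycle $u,w_1,v,w_2$ the chord demanded by chordality can indeed be $\{u,v\}$ itself.

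That delicate point is precisely where the paper's own proof breaks down---it declares $u,l_1,v,l_2$ to be a chordless $4$-cycle while overlooking the chord $\{u,v\}$---so your caution is well placed and in fact exposes a flaw in the source rather than a defect in your plan. The statement (and your argument) is repaired by adding the hypothesis under which the proposition is actually invoked in the proof of Theorem~\ref{thm:proj}: there $\{u,v\}=\{i_1,j_1\}$ is an edge newly added to a chordal graph $G_0$, so Lemma~\ref{lemma:clique} applied to $G_0$ does deliver uniqueness of the maximal clique of $G_1=G_0+\{i_1,j_1\}$ containing $i_1,j_1$, and your triangle-extension argument then yields the identification verbatim. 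Put differently, the correct general statement is: \emph{if} the maximal clique containing the edge $\{u,v\}$ is unique, \emph{then} it equals $\{u,v\}\cup\left(N_G(u)\cap N_G(v)\right)$---a fact that, as your first route shows, needs no chordality at all; chordality alone does not supply the uniqueness.
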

\begin{proof}
If $N_{G} (u) \bigcap N_{G}(v) = \emptyset$, the result holds. When $N_{G} (u) \bigcap N_{G}(v)$ is nonempty, we observe that $N_{G} (u) \bigcap N_{G}(v)$ contains all cliques involving $\{u, v\}$, minus $\{u, v\}$. Indeed, if $L$ is a clique containing both $u$ and $v$, then any element (other than $u$ and $v$) in $C$ is a neighbor of $u$ and $v$, which means $L \subseteq N_{G} (u) \bigcap N_{G}(v)$. Hence if we can show that $C = \bigcurly{u, v} \bigcup \left (N_{G} (u) \bigcap N_{G}(v) \right )$ is a clique, then by definition, $C$ is the maximal clique containing $\bigcurly{u, v}$. Indeed, suppose there exist $l_1, l_2 \in N_G(u) \bigcap N_{G}(v) $ such that $l_1$ and $l_2$ are disconnected, then the set $\bigcurly{u, l_1, v, l_2, u}$ is a cycle of length 4 without a chord, contradicting with the fact that $G$ is chordal. The uniqueness comes from the maximality of $C$. \qed
\end{proof}

\begin{theorem} \label{thm:proj}
$\forall k,~2\le k \le n$, $P^k_{F}=\proj_{F} P^k$. 
\end{theorem}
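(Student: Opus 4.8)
The inclusion $\proj_F P^k \subseteq P^k_F$ is already established in Lemma~\ref{lemma:easy}, so the whole task reduces to proving the reverse inclusion $P^k_F \subseteq \proj_F P^k$; this is exactly the Completion Property. The plan is to take an arbitrary $x_F \in P^k_F$ and build the missing coordinates $x_T$ on $V^2 \setminus F$ one edge at a time, following the fill-in sequence supplied by Lemma~\ref{lemma:grone}. That lemma gives chordal graphs $G_0 = G_F, G_1, \dots, G_s$ with $G_s$ complete and each $G_i$ obtained from $G_{i-1}$ by adding a single edge. I would carry the induction hypothesis that the partial vector defined on the edges of $G_i$ satisfies the clique-restricted triangle and clique inequalities~\eqref{eq:maxclique1}--\eqref{eq:maxclique2} relative to the maximal cliques of $G_i$. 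The base case $i=0$ is precisely the assumption $x_F \in P^k_F$, and since the clique-restricted system for the complete graph $G_s$ coincides with~\eqref{kpart:triangle}--\eqref{kpart:clique}, reaching $i=s$ yields a point of $P^k$ whose restriction to $F$ is $x_F$, giving $x_F \in \proj_F P^k$.

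For the inductive step I add the edge $\{u,v\}$. By Lemma~\ref{lemma:clique} there is a unique maximal clique $C$ of $G_i$ containing both endpoints, and by Proposition~\ref{prop:maximalclique} it has the explicit form $C = \bigcurly{u,v} \bigcup (N_{G_{i-1}}(u) \bigcap N_{G_{i-1}}(v))$; writing $S = N_{G_{i-1}}(u) \bigcap N_{G_{i-1}}(v)$, the sets $S$, $S \cup \bigcurly{u}$ and $S \cup \bigcurly{v}$ are all cliques already present in $G_{i-1}$. The first observation is that on any such clique the triangle inequalities force the relation ``$x_{ab}=1$'' to be an equivalence relation, so the inductive values partition each clique, and the clique inequalities guarantee at most $k$ classes. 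I would then define $x_{uv}=1$ exactly when $u$ and $v$ attach to the same class of the partition that $x$ induces on $S$ (with the convention that this includes the case where neither $u$ nor $v$ is joined to any vertex of $S$), and $x_{uv}=0$ otherwise.

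The remaining work is to check that this single new value preserves the invariant, and this is the main obstacle. Because $C$ is the only new maximal clique, the only fresh constraints are the triangles $(u,v,h)$ with $h \in S$ and the clique inequalities on $(k+1)$-subsets of $C$ containing both $u$ and $v$. Verifying the triangles amounts to showing the extended relation stays transitive on $C$, which follows directly from the definition. The delicate point is the clique inequality: one must show the partition of the full clique $C$ still has at most $k$ classes. The hard case is when $u$ and $v$ are each unrelated to all of $S$, where the naive transitive-closure choice $x_{uv}=0$ would create two new singleton classes and could push the total to $k+1$, violating a clique inequality that is \emph{not} covered by the hypothesis (since $C$ was not a clique of $G_{i-1}$). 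This is precisely why the definition sets $x_{uv}=1$ there, merging $u$ and $v$. I would close the argument by counting classes: applying the inductive clique inequality to the $G_{i-1}$-cliques $S\cup\bigcurly{u}$ and $S\cup\bigcurly{v}$ bounds the number of classes of $S$ (plus one singleton) by $k$, and a short case analysis on whether $u$ and $v$ attach to existing classes of $S$ shows the chosen value of $x_{uv}$ always yields at most $k$ classes on $C$ while never violating a forced triangle inequality.
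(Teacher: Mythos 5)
Your proposal is correct and its skeleton coincides with the paper's: the same fill-in induction from Lemma~\ref{lemma:grone}, the same unique maximal clique $C$ identified via Lemma~\ref{lemma:clique} and Proposition~\ref{prop:maximalclique}, and your assignment rule ($x_{uv}=1$ iff $u$ and $v$ attach to the same class of $S$ or to none) produces exactly the same values as the paper's three-case rule (its cases 1 and 3 give $1$, its case 2 gives $0$; the exclusivity of cases 2 and 3, which the paper proves by deriving $x_{hh'}\le 0$ and $x_{hh'}\ge 1$, is in your language the statement that the induced relation on $S\cup\{u\}$ and $S\cup\{v\}$ is a well-defined partition). Where you genuinely differ is the feasibility check, which is the bulk of the paper's proof. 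The paper verifies the clique inequalities by direct manipulation, and its hardest sub-case ($x_{uv}=0$, $\abs{C}\ge k+2$, and a $(k+1)$-subset $Q$ avoiding the witness $h^*$) requires a delicate summation-and-contradiction argument built from the clique inequalities on $Q\setminus\{u\}\cup\{h^*\}$ and $Q\setminus\{v\}\cup\{h^*\}$. You instead note that, once the triangle inequalities hold on a clique, the clique inequalities say precisely ``at most $k$ equivalence classes,'' and you count: $C$ has at most one more class than $S$, and that count is already certified by the inductive hypothesis applied to the old cliques $S\cup\{u\}$ and $S\cup\{v\}$ (trivially when $\abs{S}+1\le k$, and via their clique inequalities otherwise). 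This buys a shorter, more conceptual verification that subsumes the paper's case split on $\abs{C}$ and on whether $h^*\in Q$; the paper's computation, by contrast, stays entirely within the inequality system and never needs the partition reformulation. Your sketch leaves the final ``short case analysis'' unexecuted, but all four configurations (both endpoints attach to the same class, to different classes, exactly one attaches, neither attaches) do close as you claim, so I see no gap.
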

\begin{proof}
By Lemma~\ref{lemma:easy}, $\proj_{F} P^k \subseteq P^k_{F}.$ We now show that the reverse also holds. 
 Given $G_F$, let $\{G_0, G_1, \dots G_s\}$ be a sequence of choral graphs satisfying Lemma~\ref{lemma:grone}.
 Denote by $\{i_1 ,j_1\}$ the edge of $G_1$ that is not a member of $G_0$ (w.l.o.g., we assume $i_1 < j_1$). Let $\bar x_F$ be a point in $P^k_{F}$. If we can show that there exists a vector $\bar x_1 = (\bar x_F,x_{i_1j_1})$ satisfying the constraints in $P^k_{G_1}$, then by induction, we can show the existence of a point $x_s \in P^k$.
 
By Lemma~\ref{lemma:clique}, there is a unique maximal clique $C$ of $G_1$ containing nodes $i_1, j_1$ and it can be identified by Propoistion~\ref{prop:maximalclique}.
Without loss of generality, we may reorder indices of the nodes in $C$ and let $C=\bigcurly{l_0, \dots,l_p,i_1,j_1}$ with $l_0 < \dots <l_p <i_1 <j_1$. Let $x_C$ be the vector corresponding to the maximal clique $C$. Since any clique containing $\{i_1,j_1\}$ is a subset of $C$, $x_1 \in P^k_{G_1}$ iff $x_C \in P^k_{C}$, where $P^k_{C}$ is formed by replacing $V$ with $C$ in~\eqref{eq:polytope}.
 
Hence we only need to show that $x_C \in P^k_{C}$ for some value of $x_{i_1j_1}$. Given $x_F$, we construct $x_{i_1j_1}$ as follows: if $C=\bigcurly{i_1, j_1}$, let $x_{i_1j_1} =1$ and the solution is feasible. Otherwise, three cases can occur: 
\begin{enumerate}
\item \label{allzeros} $(x_{hi_1}, x_{hj_1}) = (0, 0), \forall h \in \bigcurly{l_0,\dots, l_p}$. 
\item \label{existsone} $\exists h \in \bigcurly{l_0,\dots, l_p}$ such that $x_{hi_1} + x_{hj_1} = 1$.
\item \label{existstwo} $\exists h \in \bigcurly{l_0,\dots, l_p}$ such that $x_{hi_1} + x_{hj_1} = 2$.
\end{enumerate}
We now construct the solution as follows:
\begin{enumerate} 
\item If case~\ref{allzeros} or case~\ref{existstwo} occurs, let $x_{i_1j_1}=1$.
\item otherwise (i.e., case~\ref{existsone} occurs), let $x_{i_1j_1} = 0$.
\end{enumerate} 
In order to show that the constructed solution is {valid}, we next show that case~\ref{allzeros} and case~\ref{existstwo} are exclusive with respect to case~\ref{existsone}. 
It is obvious that case \ref{allzeros} is exclusive with case \ref{existsone}. We now show that case \ref{existsone} and case \ref{existstwo} are exclusive:
\begin{itemize}[label={$\bullet$}]
\item The result is straightforward when $p=0$. 
\item When $p \ge 1$, we proceed by contradiction: suppose there exists $h, h' \in V: h \neq h'$~ such that $x_{hi_1} + x_{hj_1} = 1$ and $x_{h'i_1} + x_{h'j_1} = 2$. Let us consider $(x_{hi_1}, x_{hj_1}) = (0, 1)$. The other case $(x_{hi_1}, x_{hj_1}) = (1, 0)$ will follow symmetrically. Note that $\{h,h', i_1\}$ and $\{h, h', j_1\}$ are cliques of length 3 in $G_F$. Thus by~\eqref{eq:maxclique1}, we have, on one hand, $x_{hh'} + x_{h'i_1} - x_{hi_1} \le 1$ leading to $x_{hh'} \le 0$. On the other hand, we have $ x_{h'j_1} + x_{hj_1} - x_{hh'} \le 1$ leading to $x_{hh'} \ge 1$, contradiction.
\end{itemize}
We now verify that the extended solution $x_1= (x_F, x_{i_1j_1})$ satisfies constraints in $P^k_C$. Since $\bigcurly{l_0, \dots,l_p,i_1}$ and $\bigcurly{l_0, \dots,l_p,j_1}$ are cliques in $G_F$, the associated constraints have been imposed by $P^k_F$. Thus, we just need to verify that $x_1$ satisfies 
 \begin{subequations}
\begin{align}
&x_{hi_1} + x_{hj_1} - x_{i_1j_1} \le 1, \forall h \in \bigcurly{l_0, \dots, l_p} \label{eq:tri1} \\
&x_{hi_1} + x_{i_1j_1} -x_{hj_1} \le 1, \forall h \in \bigcurly{l_0, \dots, l_p} \label{eq:tri2}\\
&x_{hj_1} + x_{i_1j_1} - x_{hi_1} \le 1, \forall h \in \bigcurly{l_0, \dots, l_p}\label{eq:tri3}\\
&\sum\limits_{\{h,f\} \in C_q} x_{hf} + x_{i_1j_1} \ge 1, \; \forall Q\subseteq C, \abs{Q} = k+1,~i_1, j_1 \in Q \label{eq:cli}
\end{align}
\end{subequations}
where $C_q = \bigcurly{\{h,f\} \in Q^2:~ h < f,~\{h, f\} \ne \{i_1, j_1\}}$. 

\begin{itemize}[label={$\bullet$}]
\item First, let us show that the the constructed solution satisfies the above triangle inequalities~\eqref{eq:tri1}--\eqref{eq:tri3}. For case~\ref{allzeros}, for each $h, i_1, j_1\in C $, the unique solution $(x_{hx_1}, x_{hj_1}, x_{i_1j_1}) = (0, 0, 0)$ is feasible. For case~\ref{existsone}, both solution $(x_{hi_1}, x_{hj_1}, x_{i_1j_1}) = (1, 0, 0)$ and $(x_{hi_1}, x_{hj_1}, x_{i_1j_1}) = (0, 1, 0)$ are feasible. For case~\ref{existstwo}, the unique solution $(x_{hi_1}, x_{hj_1}, x_{i_1j_1})= (1, 1, 1)$ is feasible. 

\item Second, we need to verify that the constructed solution is feasible for the clique inequalities~\eqref{eq:cli} when $\abs{C} \ge k+1$.
It is easy to see that this is true for case~\ref{allzeros} and case~\ref{existstwo}. For case~\ref{existsone}, we consider $\abs{C} = k+1$ and $C \ge \abs{k+2}$. Let us denote by $h^*$ the index such that $x_{h^*i_1} + x_{h^*j_1} = 1$. Recall that $x_{i_1j_1} = 0$. 
\begin{enumerate}
\item When $\abs{C}=k+1$, it holds that $Q= C$, $\{h^*, i_1\} \in C_q$ and $\{h^*, j_1\} \in C_q$. Then we have 
\begin{align*}
&\sum\limits_{\{h,f\} \in C_q} x_{hf} + x_{i_1j_1} \ge 1,
\end{align*}

\item When $\abs{C} \ge k+2$, there are multiple subsets $Q$. If $Q$ contains $h^*$, the desired result follows. If $h^*\not \in Q$, we have 
\begin{align*}
&\sum\limits_{\{h,f\} \in C_q} x_{hf} + x_{i_1j_1}& = \sum\limits_{\{h, f \} \in C_q: f < i_1} x_{hf} +\sum\limits_{\{h, i_1\} \in C_q} x_{hi_1}+\sum\limits_{\{h, j_1\} \in C_q} x_{hj_1}
\end{align*}
Note that if the following disjunction is true:
$$\sum\limits_{\{h,f\} \in C_q: f < i_1} x_{hf}\ge 1 \vee \sum\limits_{\{h,i_1\} \in C_q} x_{hi_1} \ge 1 ~\vee~\sum\limits_{\{h, j_1\} \in C_q} x_{hj_1} \ge 1$$ 
then we have $\sum\limits_{\{h, f\} \in C_q} x_{hf} + x_{i_1j_1} \ge 1$, the result follows. We now show that it is not possible to have a solution $x_F \in P^k_F$ satisfying 
\begin{subequations}
\begin{align}
&\sum\limits_{\{ h,f\} \in C_q: f < i_1} x_{hf} = 0 \label{eq:sub1}\\
&\sum\limits_{\{h,i_1\} \in C_q} x_{hi_1} = 0 \label{eq:sub2}\\
&\sum\limits_{\{h,j_1\} \in C_q} x_{hj_1} = 0. \label{eq:sub3}
\end{align}
\end{subequations}
We proceed by contradiction. Assume that \eqref{eq:sub1}-\eqref{eq:sub3} hold. Then we have:
\begin{align*}
0=&\sum\limits_{\{h, f\} \in C_q: f < i_1} x_{hf} +\sum\limits_{\{h, i_1\} \in C_q} x_{hi_1}+\sum\limits_{\{h, j_1\} \in C_q} x_{hj_1} \\
=& \sum\limits_{\{h, i_1\} \in C_q} x_{hi_1}+\sum\limits_{\{h, j_1\} \in C_q} x_{hj_1}\\
\ge & 2 - 2\sum\limits_{\{h, f\} \in C_q: f< i_1} x_{hf} - 2\sum\limits_{\{h, h^*\} \in C_q} x_{hh^*} - x_{h^*{i_1}} - x_{h^*{j_1}} \\
\ge & 1 - 2\sum\limits_{\{h, h^*\} \in C_q} x_{hh^*}\\
= & 1, 
\end{align*}
which is a contradiction. 
The first``$\ge$" comes from the fact that $Q \setminus \bigcurly{i_1} \cup \{h^*\}$ and $Q \setminus \bigcurly{j_1} \cup \{h^*\}$ are cliques of size $k+1$. Thus by clique inequalities~\eqref{eq:maxclique2} enforced in $P^k_F$, we have 
\begin{align*}
&\sum\limits_{\{h, f\} \in C_q: f< i_1} x_{hf} +\sum\limits_{\{h, j_1\} \in C_q} x_{hj_1} + \sum\limits_{\{h, h^*\} \in C_q} x_{hh^*}+ x_{h^*{j_1}} \ge 1, \\
&\sum\limits_{\{h, f\} \in C_q: f< i_1} x_{hf} +\sum\limits_{\{h, i_1\} \in C_q} x_{hi_1} + \sum\limits_{\{h, h^*\} \in C_q} x_{hh^*}+ x_{h^*{i_1}} \ge 1.
\end{align*}
The second ``$\ge$" comes from~\eqref{eq:sub1} and $ x_{h^*{i_1}} + x_{h^*{j_1}} = 1.$ 
The last equality comes from the fact that for each $h\in Q: h < i_1, h\neq h^*$, $\{h, h^*, i_1\}$ and $\{h, h^*, j_1\}$ are cliques in~$G_F$. 
Thus, by \eqref{eq:sub2}-\eqref{eq:sub3} and the triangle inequality~\eqref{eq:maxclique1} in $P^k_F$, we have
\begin{align*}
x_{hh^*} \le 1+ x_{hi_1} - x_{h^*i_1} \; \; \textrm{and} \; \; x_{hh^*} \le 1+ x_{hj_1} - x_{h^*j_1}, \forall h\in Q: h < i_1. 
\end{align*}
which leads to $x_{hh^*} = 0, \forall h\in Q: h < i_1, h\neq h^* $.
\end{enumerate}
\end{itemize}
\qed
\end{proof}
 This immediately yields the following desired result. 
 \begin{corollary}\label{cor:kpart}
 Model~\ref{model:kparties} is equivalent to Model \ref{model:reduced} in the sense that for any feasible solution for Model $\ref{model:kparties}$, there exists a feasible solution for~\ref{model:reduced} such that their objective values are equal; conversely, for any feasible solution $x_F$ for Model $\ref{model:reduced}$, there also exists a vector extending $x_F$ such that it is feasible for Model~\ref{model:kparties}. 
 \begin{model}[!h]
\caption{The clique-based reformulation of the M$k$P problem}
\label{model:reduced}
%
 \begin{align*}
 \mbox{\bf variables:} \; \;& x \in \bigcurly{0, 1}^{\abs{F} } \\
\mbox{\bf minimize:} \;\; & \eqref{kpart:obj}\\
 \mbox{\bf s.t.:} \;\;&\eqref{eq:maxclique1}, \eqref{eq:maxclique2}.
\end{align*}
\end{model}
 \end{corollary}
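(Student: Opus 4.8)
The plan is to derive the corollary as an almost immediate consequence of Theorem~\ref{thm:proj}, which establishes the projection identity $P^k_F = \proj_F P^k$. The two facts I would assemble are: (i) a feasible-solution correspondence between the two models, realised by projection in one direction and by extension in the other, and (ii) the invariance of the shared objective~\eqref{kpart:obj} under this correspondence.

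First I would record the key objective-invariance observation. The objective~\eqref{kpart:obj} is $\sum_{\{i,j\} \in E} w_{ij} x_{ij}$, and since $E \subseteq F$ holds by the very definition of the chordal extension $G_F$, every term in this sum is indexed by a variable that survives in Model~\ref{model:reduced}. Hence for any $x = (x_F, x_T)$, the objective value depends only on the block $x_F$ and is unaffected by the complementary block $x_T$, so that the objective of $x$ in Model~\ref{model:kparties} and that of $x_F$ in Model~\ref{model:reduced} always coincide.

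Next I would verify the two directions of the equivalence. For the forward direction, I would take any $\bar x$ feasible for Model~\ref{model:kparties}, i.e. $\bar x \in P^k$; by Lemma~\ref{lemma:easy} its restriction $\proj_F \bar x$ lies in $P^k_F$ and is therefore feasible for Model~\ref{model:reduced}, with matching objective by~(i). For the converse, I would take any $x_F$ feasible for Model~\ref{model:reduced}, i.e. $x_F \in P^k_F$; by Theorem~\ref{thm:proj} we have $x_F \in \proj_F P^k$, so there exists $x_T \in \{0,1\}^{\abs{V^2 \setminus F}}$ with $(x_F, x_T) \in P^k$, which is then feasible for Model~\ref{model:kparties} and again has the same objective value.

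I do not expect a genuine obstacle here, since the substantive content is already carried by Theorem~\ref{thm:proj}; the only point requiring care is the objective-invariance step, where it is essential to invoke $E \subseteq F$ so that no weighted edge is dropped when passing to the reduced variable set. The corollary is thus a clean repackaging of the projection identity together with this invariance.
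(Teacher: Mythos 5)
Your argument is correct and follows the paper's own proof exactly: both rest on the observation that the objective~\eqref{kpart:obj} only involves variables indexed by $E \subseteq F$, combined with the projection identity of Theorem~\ref{thm:proj} (with Lemma~\ref{lemma:easy} supplying the easy direction). Your write-up merely makes explicit the two directions that the paper leaves implicit.
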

 \begin{proof}
Observe that the objective~\eqref{kpart:obj} is determined by variables $x_{ij}, \forall \{i, j\}\in E$. Given that $E \subseteq F$ and based on Theorem~\ref{thm:proj}, the result follows. \qed
 \end{proof}
%
 Observe that Model~\ref{model:reduced} has less binary variables than Model~\ref{model:kparties} when $G$ is not complete. In the most favorable case, where the given graph $G=(V, E)$ is chordal, Model~\ref{model:reduced} involves no additional binary variables and thus becomes more attractive than Model~\ref{model:node-edge}. 
{In Section 2 of~\cite{chopra1993:partition}, the authors state that if $G$ is not complete, they are not aware of any formulation that uses only edge variables.}
Here, we see from Corollary~\ref{cor:kpart} that if $G$ is chordal, Model~\ref{model:reduced} uses exactly $\abs{E}$ variables. 
 
\subsection{A compact ISDP reformulation}\label{sec:extension}
As remarked before, the compact reformulation~\ref{model:reduced} of Model~\ref{model:kparties} exploits the structured sparsity of the chordal extension of the original graph $G$. Similarly, we show in this section that there exists a clique-based reformulation of Model~\ref{model:SDP1}. It is presented in Model~\ref{model:SDP1reduce}.
\begin{model}[!h]
\caption{The clique-based integer SDP formulation of the M$k$P problem}
\label{model:SDP1reduce}
\begin{subequations}
\begin{align}
 \mbox{\bf variables:} \; \;& X_{ij} \in \bigcurly{\frac{-1}{k-1}, 1} \; \;(\forall \{i, j\}\in F) \\
\min\;\; & \eqref{SDP1:obj} \notag \\
 \subto \;\;& X_{ii} = 1, \; \;\forall i \in V. \label{SDP1reduce:1} \\
 & X_{C_r} \in \realS^n_+, \; \forall C_r \in \K \label{SDP1reduce:2}.
\end{align}
\end{subequations}
\end{model}

To show that Model~\ref{model:SDP1reduce} is a valid formulation of M$k$P problem, it is sufficient to show that 
\begin{align*}
\proj_{F} \bigcurly{x \in \bigcurly{\frac{-1}{k-1}, 1}^{n(n-1)/2}:\eqref{SDP1:1},\eqref{SDP1:2} } = \bigcurly{x \in \bigcurly{\frac{-1}{k-1}, 1}^F: \eqref{SDP1reduce:1},\eqref{SDP1reduce:2}}, 
\end{align*}
where the projection operator $\proj_{F} S$ has been defined in~\eqref{operation:proj}.
With the bijective mapping defined in~\eqref{eq:mapping}, we see that the left-hand side and the right-hand side of the above equation correspond to the respective $\F$ and $\F'$ presented below, 
 \begin{align*}
 &\F = \bigcurly{y \in \bigcurly{0, 1}^{\frac{n(n-1)}{2}}: \; \eqref{SDP2:1}, \eqref{SDP2:2}}, \\
 &\F' = \bigcurly{y \in \bigcurly{0, 1}^{F}: \frac{-1}{k-1}J_{C_r} + \frac{k}{k-1} Y_{C_r} \in \realS^n_+, \; \forall C_r \in \K, \; Y_{ii} = 1, \; \forall i \in V}.
\end{align*}

Thus we just need to prove that $\proj_{F}\F= \F'$. 
To this end, we exploit a technical lemma that was proposed in~\cite{frieze1997:improved}. A similar lemma has been used by Eisenbl{\"a}tter's~\cite{eis2002:sdp} to prove the equivalence of Model~\ref{model:SDP1} and Model~\ref{model:kparties}. 

\begin{lemma} [Lemma 1, \cite{frieze1997:improved}] \label{lemma:frieze}
For all integers $n$ and $k$ satisfying $2 \le k \le n + 1$, there exist $k$ unit vectors $\bigcurly{u_1, \dots, u_k} \in \real^n$ such that $\scp{u_l}{u_h} = \frac{-1}{k-1},$ for $l \ne h$.
\end{lemma}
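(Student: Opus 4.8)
The plan is to construct the $k$ vectors explicitly as the normalized vertices of a regular simplex centered at the origin, and then to observe that they span only a $(k-1)$-dimensional subspace, which embeds isometrically into $\real^n$ precisely because the hypothesis $2 \le k \le n+1$ forces $k-1 \le n$. This reduces the whole statement to a short inner-product computation plus a dimension-counting remark.

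First I would work in $\real^k$ with the standard basis $e_1, \dots, e_k$ and their centroid $\bar e = \frac{1}{k}\sum_{i=1}^k e_i$. Setting $v_i = e_i - \bar e$, I would expand $\scp{e_i - \bar e}{e_j - \bar e}$ and use $\scp{e_i}{e_j} = \delta_{ij}$, $\scp{e_i}{\bar e} = 1/k$ and $\scp{\bar e}{\bar e} = 1/k$ to obtain $\scp{v_i}{v_i} = (k-1)/k$ and $\scp{v_i}{v_j} = -1/k$ for $i \ne j$. Normalizing by $u_i = \sqrt{k/(k-1)}\,v_i$ then yields unit vectors with $\scp{u_i}{u_j} = \frac{-1/k}{(k-1)/k} = \frac{-1}{k-1}$ for $i \ne j$, exactly the prescribed value.

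The only place the dimension hypothesis enters is the final step: each $v_i$ lies in the hyperplane $H = \bigcurly{x \in \real^k : \sum_{\ell=1}^k x_\ell = 0}$, which is a $(k-1)$-dimensional subspace. Fixing an orthonormal basis of $H$ identifies it isometrically with $\real^{k-1}$, and since $k-1 \le n$, appending zero coordinates realizes the $u_i$ as unit vectors in $\real^n$ having all pairwise inner products equal to $\frac{-1}{k-1}$. This completes the construction.

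I do not expect a serious obstacle here: the argument is fully explicit and the only thing to verify is the routine arithmetic of the inner products, together with the trivial embedding $\real^{k-1} \hookrightarrow \real^n$. An equivalent and arguably cleaner route, which avoids choosing a basis of $H$, is to argue at the level of the target Gram matrix $M = \frac{k}{k-1} I - \frac{1}{k-1} J$ with $J$ the $k \times k$ all-ones matrix; its eigenvalues are $0$ on the span of $\mathbf{1}$ and $\frac{k}{k-1}$ with multiplicity $k-1$, so $M \succeq 0$ with $\mathrm{rank}(M) = k-1$. Any factorization $M = U^\top U$ then has columns $u_1, \dots, u_k$ with $\scp{u_i}{u_j} = M_{ij}$, and these live in $\real^{k-1} \subseteq \real^n$ since $\mathrm{rank}(M) = k-1 \le n$.
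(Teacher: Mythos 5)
Your construction is correct: the inner-product arithmetic for the centered standard basis vectors checks out ($\scp{v_i}{v_j}=\delta_{ij}-1/k$, hence $\scp{u_i}{u_j}=-1/(k-1)$ after normalization), and the dimension count $k-1\le n$ is exactly where the hypothesis $2\le k\le n+1$ is used. Note that the paper itself gives no proof of this lemma --- it is imported verbatim as Lemma~1 of Frieze and Jerrum --- and your argument is the standard regular-simplex construction underlying that cited result, with the Gram-matrix factorization of $\frac{k}{k-1}I-\frac{1}{k-1}J$ as a clean equivalent formulation; either version would serve as a self-contained proof here.
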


\begin{theorem}\label{prop:sdpreduce}
Given integer $2 \le k \le n, \proj_{F}\F= \F'$.
\end{theorem}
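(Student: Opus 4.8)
The plan is to deduce this SDP statement from the purely combinatorial Theorem~\ref{thm:proj} by identifying both $\F$ and $\F'$ with $k$-partition sets already understood. Under the bijection~\eqref{eq:mapping}, an integer point of $\F$ is a $0/1$ vector $y$ for which $X = \frac{-1}{k-1}J + \frac{k}{k-1}Y$ has diagonal $1$, off-diagonal entries in $\bigcurly{\frac{-1}{k-1},1}$, and $X\in\realS^n_+$. The first claim I would establish is that such matrices are exactly the Gram matrices of assignments of the complete graph to \emph{at most} $k$ parts; equivalently, that $\F=P^k$ as subsets of $\bigcurly{0,1}^{n(n-1)/2}$. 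This is precisely Eisenbl{\"a}tter's equivalence of Model~\ref{model:SDP1} and Model~\ref{model:kparties}, so it may be cited, and the easy inclusion $\proj_F\F\subseteq\F'$ is immediate since a principal submatrix of a PSD matrix is PSD.

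For the key characterization I would record both directions. For ``partition $\Rightarrow$ PSD'', take the $k$ unit vectors $u_1,\dots,u_k$ of Lemma~\ref{lemma:frieze}, assign $u_{c(i)}$ to vertex $i$ according to its part $c(i)$, and note $X_{ij}=\scp{u_{c(i)}}{u_{c(j)}}$ is a Gram matrix, hence PSD, with the correct $\bigcurly{1,\frac{-1}{k-1}}$ entries. For the converse ``integer PSD $\Rightarrow$ partition'', define $i\sim j$ iff $X_{ij}=1$; positive semidefiniteness of the $2\times2$ and $3\times3$ principal minors forces $\sim$ to be an equivalence relation (if $X_{ih}=X_{hj}=1$ the $3\times3$ minor is singular and forces $X_{ij}=1$), and the number of classes is at most $k$, since any $k+1$ pairwise-distinct classes would yield an off-diagonal-$\frac{-1}{k-1}$ principal submatrix $\frac{k}{k-1}I-\frac{1}{k-1}J$ of order $k+1$, whose eigenvalue $\frac{-1}{k-1}<0$ contradicts $X\in\realS^n_+$. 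This gives $\F=P^k$.

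Next I would apply this characterization clique by clique. Each maximal clique $C_r\in\K$ induces a complete graph with $C_r\times C_r\subseteq F$, so $X_{C_r}$ is fully specified, and the constraint $X_{C_r}\in\realS^n_+$ with unit diagonal and integer entries is exactly Model~\ref{model:SDP1} on the complete graph over $C_r$. The characterization above then shows that $y$ restricted to $C_r$ lies in $P^k_{C_r}$, i.e.\ satisfies~\eqref{eq:maxclique1}--\eqref{eq:maxclique2} on $C_r$; intersecting over $r$ yields $\F'=P^k_F$. Chaining the identifications gives $\proj_F\F=\proj_F P^k=P^k_F=\F'$, where the middle equality is Theorem~\ref{thm:proj}.

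The one genuine difficulty is the inclusion $\F'\subseteq\proj_F\F$, i.e.\ extending a point of $\F'$ to a full \emph{integer} PSD matrix. The generic chordal PSD completion theorem (Grone et al., in the spirit of Lemmas~\ref{lemma:clique}--\ref{lemma:grone}) would only furnish a real PSD completion and would control neither integrality nor the global bound of $k$ parts. Routing the argument through Theorem~\ref{thm:proj} is exactly what removes this obstacle: that theorem supplies a combinatorial $0/1$ extension lying in $P^k$, hence a genuine assignment of all of $V$ to at most $k$ parts agreeing with $y_F$ on $F$, and the Gram construction of Lemma~\ref{lemma:frieze} turns this assignment into the required integer PSD matrix whose projection onto $F$ is $y_F$. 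Thus the integrality and the ``$\le k$ parts'' constraints, which are the hard part of an SDP completion, are discharged by the combinatorial theorem proved earlier rather than re-derived here.
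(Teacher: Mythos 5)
Your proposal is correct and follows essentially the same route as the paper: both reduce the claim to the chain $\proj_{F}\F=\proj_{F}P^k=P^k_F=\F'$, invoking $\F=P^k$ and Theorem~\ref{thm:proj} for the first two equalities, and then establish $P^k_F=\F'$ clique by clique using small principal minors of $X_{C_r}$ for one inclusion and the Gram construction from Lemma~\ref{lemma:frieze} for the other. Your observation that Theorem~\ref{thm:proj} is what discharges the integer PSD completion difficulty (which a generic chordal completion result would not) is exactly the role it plays in the paper's argument.
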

\begin{proof}
As remarked in~Section~\ref{sec:review}, $\F= P^k$ defined in~\eqref{eq:polytope}. 
Thus it holds that $\proj_{F}\F= \proj_{F} P^k$. By Theorem~\ref{thm:proj}, $\proj_{F} P^k = P^k_F$. 
Thus, it is sufficient to show that $P^k_F = \F'$. 
Although the proof is close to the reasoning of proving $P^k = \F$ given by Eisenbl{\"a}tter's~\cite{eis2002:sdp}, 
we state it below for the sake of completeness.

\begin{itemize}[label={$\bullet$}]
\item $ \F' \subseteq P^k_F$: Let $\bar y $ be any binary vector in $\F'$. We show that $\bar y$ satisfies the triangle inequalities~\eqref{eq:maxclique1} and clique inequalities~\eqref{eq:maxclique2}. 
\begin{enumerate}
\item Triangle inequalities~\eqref{eq:maxclique1}: Suppose there exists $i, j, h \in \C_r$ such that $(y_{ij}, y_{ih}, y_{jh})$ violates~\eqref{eq:maxclique1}. 
Then, $\{y_{ij}, y_{ih}, y_{jh}\}$ can only be assigned the values $(1, 1, 0)$, $(1, 0, 1)$ or $(0, 1, 1)$. 
Since $\bar y \in \F'$, the principle sub-matrix of $X_{C_r} = \frac{-1}{k-1}J_{C_r} + \frac{k}{k-1} Y_{C_r}$ corresponding to indices $(i, j, h)$ in the following form
\begin{align} 
\begin{pmatrix}
1 & x_{ij} & x_{ih} \\
x_{ij} & 1 & x_{jh} \\
 x_{ih} & x_{jh} & 1
\end{pmatrix}
\end{align}
is positive semidefinite. 
This implies its determinant $1+2x_{ij}x_{ih}x_{jh}- x_{ij}^2 -x_{ih}^2 - x_{ij}^2 \ge 0 $. 
One can verify that if $\{y_{ij}, y_{ih}, y_{jh}\}$ is assigned value $(1, 1, 0)$, $(1, 0, 1)$ or $(0, 1, 1)$, the determinant becomes $-(\frac{k}{k-1})^2 < 0$, contradiction. 
\item Clique inequalities~\eqref{eq:maxclique2}: Suppose there exists $Q\subseteq \C_r: \abs{Q} = k+1$ such that $ y_Q$ violates the clique constraint~\eqref{eq:maxclique2}. This implies that $x_{ij} = \frac{-1}{k-1}, ~\forall (i,j) \in Q$, leading to $\sum\limits_{i, j \in Q: i<j} x_{ij }= \frac{-k(k+1)}{2(k-1)}$.
Note that $X_{Q} = \frac{-1}{k-1}J_{Q} + \frac{k}{k-1} Y_{Q} \in \realS^{\abs{k+1}}_{+}$,  suggesting that 
$\allones^T X_Q \allones= \sum\limits_{i,j \in Q} 2x_{ij} + k+1 \ge 0$, where $\allones$ is the all ones vector.
 This leads to $\sum\limits_{ij} x_{ij }\ge \frac{-(k+1)}{2} > \frac{-(k+1)}{2}\times \frac{k}{k-1}$, contradiction. 
\end{enumerate}

\item $P^k_F \subseteq \F'$: for any $\bar y \in P^k_F$, we show that for each $C_r \in \K$ ($\abs{C_r} \ge 3$), the matrix $X_{C_r} = \frac{-1}{k-1}J_{C_r} + \frac{k}{k-1} Y_{C_r}$ formed by vector $y_{C_r}$ is positive semi-definite. Given $k$, let $\{u_1,\dots, u_k\}$ be a set of unit vectors satisfying Lemma~\ref{lemma:frieze}. Now we construct a real matrix $B\in \real^{n \times \abs{C_r}}$ in the following way. For each $i, j \in C_r$,
\begin{enumerate}
\item If $y_{ij} = 0$, then assign column $i, j$ of matrix $B$ with different unit vectors from set $\{u_1,\dots, u_k\}$.
\item Otherwise, assign column $i, j$ with the same unit vector from set $\{u_1,\dots, u_k\}$.
\end{enumerate}
One can now verify that matrix $X_{C_r} = B^TB$, showing that $X_{C_r}$ is positive semidefinite. 
\end{itemize}
\qed
\end{proof}
In summary, the relationship between the aforementioned four constraint sets,
i.e., $P^k$, $\F$, $P^k_F$ and $\F'$, can be depicted in
Figure~\ref{fig:relation}, where the dotted arrow represents the projection operator $\proj_{F}$.
\begin{figure}
\begin{center}
\begin{tikzpicture}
 \draw (0.3, 0) node (origin) {$P^k$};
 \draw (1.7, 0) node (sdp){$\F$};
 \draw (0.3, -1.5) node (reduce) {$P^k_F$};
 \draw (1.7, -1.5) node (sdpreduce){$\F'$};
 \draw (1, 0) node {$\eqdef$};
 \draw (1, -1.5) node {$\eqdef$};
 \draw [->, dotted] (origin) -- (reduce);
 \draw [->, dotted] (sdp)-- (sdpreduce);
\end{tikzpicture}
\caption{The relationship between the four constraint sets}
\label{fig:relation}
\end{center}
\end{figure}
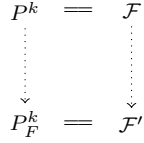

Note that the number of integer variables in Model~\ref{model:SDP1reduce} is generally smaller than that of Model~\ref{model:SDP1}, making it attractive. The number of constraints in Model~\ref{model:SDP1reduce} grows linearly with respect to the size of the clique set $\K$. The value $\abs{\K}$, as will be shown in Section~\ref{sec:construction}, is bounded by $(n -2)$. 

Analogously, four continuous relaxations of the four constraint sets can be formed by relaxing the respective integrality constraints. We denote by
$\overline{P^k}, \overline{\F}, \overline{P^k_F}$ and $\overline{\F'}$ the
respective continuous relaxation sets of $P^k, \F, P^k_F$ and $\F'$. 
It has been shown in~\cite{eis2002:sdp} that $\overline{P^k}$
and $\overline{P^k_{sdp}} $ are not contained in one another. We remark that
similar results also hold for $\overline{P^k_F}$ and $\overline{\F'}$. This will
be illustrated numerically in the subsequent section.

\subsection{The construction of the clique set}\label{sec:construction}
Model~\ref{model:reduced} and Model~\ref{model:SDP1reduce} are attractive when
they involve less constraints and integer variables than the respective
Models~\ref{model:kparties} and~\ref{model:SDP1}. Hence one would like to find an
``optimal'' (maximal) clique set $\K$ that minimizes the number of constraints and variables in these models.
{Similar to~\cite{fukuda2001:exploit,nakata2003}, we remark that it is hard to determine such a chordal extension $G_F$ and consequently the clique set $\K$. 
Alternatively, one may want to find a clique set such that the number of edges of $G_F$ is minimized, which however is $\mathcal{NP}$-complete~\cite{yannakakis1981}.}

Nevertheless, for many practical purposes, there are good methods to find chordal extensions such that the size of $\K$ (measured by $\max\limits_{C_r \in \K} \abs{C_r}$) is small. We refer readers to~\cite{bodlaender2010:treewidth} for an excellent survey. Here, we employ the~{\tt greedy fill-in heuristic}~\cite{koster2001:treewidth} to obtain $G_F$ and $\K$. The greedy fill-in heuristic attempts to create few new edges, which leads to less integer variables in Models~\ref{model:reduced} and~\ref{model:SDP1reduce}.
To make the text self-contained, we present the algorithm below. 
\begin{algorithm}[!ht]
\caption{Heuristic to find a chordal extension~\cite{koster2001:treewidth}} \label{alg:chordal}
\DontPrintSemicolon 
\SetKwInOut{Input}{input}
\SetKwInOut{Output}{output}
\Input{ Graph $G = (V, E)$}
\Output{A chordal graph $G_F= (V, F)$ of $G$ and cliques $K_i$}
Initialisation: $H = G$, $i=0$, $G_F = (V, F)$ with $F=E$ \;
\While{$\#$ Vertices of H $\ge 3$}
{
\eIf{all fill-in values of vertices in $H$ is $0$}
{ terminates \;} 
{Choose a vertex $v$ with the smallest number of fill-in edge \;
Label $v$ with $i$ (i.e., $v_i$) \;
Make the neighbouring vertices $N_H(v_i)$ of $v_i$ a clique \;
$K_i = N_H(v_i) \cup \{v_i\}$ \; 
$F = F \bigcup \bigcurly{(u, v_i): u \in N_H(v_i)}$ \;
$i = i+1$\; 
Remove $v_i$ from $H$\;
}
 }
\end{algorithm}
The term~\emph{fill-in} of a vertex refers to the number of pairs of its
non-adjacent neighbors. Note that we terminate the algorithm when the number of
nodes in $H$ is less than $3$ as all valid
inequalities~\eqref{eq:maxclique1}-\eqref{eq:maxclique2} are based on maximal
clique sets with size $\geq 3$. 

We now need to extract the maximal clique set $\K$ from $\bigcurly{K_i: i=1,\dots, n-2}$. 
Due to~\cite{fulkerson1965:chordal}, it is known that $\K$ contains exactly sets $K_i$ for which there exists no $K_i, K_j: i< j$, such that $K_j \subset K_i$. This also shows that the cardinality of $\K$ is bounded by $(n-2)$. It should also be noted that for each two distinct (maximal) cliques $(C_r, C_s) \in \K \times \K$ such that $\abs{C_r \bigcap C_s} \ge 3$, there are redundant triangle inequalities~\eqref{eq:maxclique1} in variables $x_{ij}, \{i, j\} \in C_r \bigcap C_s$. Similarly, for each two distinct (maximal) cliques $(C_r, C_s) \in \K \times \K$ such that $\abs{C_r \bigcap C_s} \ge k + 1$, there are redundant clique inequalities~\eqref{eq:maxclique2}. This redundancy can be avoided by checking the occurrence of each associated tuple. 
\subsection{The separation of valid inequalities} 
Model~\ref{model:reduced} can also suffer from a prohibitive number of clique inequalities when the size of some maximal clique set $C_r \in \K$ is large. This issue can be alleviated by a separation algorithm approach. 
For a maximal clique set $C_r \in \K$ with $\abs{C_r} \ge k+1$, the number of clique constraints is ${\abs{C_r}} \choose {k+1}$. This number grows roughly as fast as $\abs{C_r}^k$ as long as $2k \le \abs{C_r}$. 
We have noticed that some heuristics for the separation of cliques and other inequalities have been proposed in~\cite{eis2001:frequency,kaibel2011:orbit,de2016:valid,fairbrother2016:projection} and remark that these separation algorithms can be adapted for both Model~\ref{model:reduced} and~\ref{model:SDP1reduce} in a branch and bound algorithm to solve the M$k$P problem to global optimality. We
leave the sufficiently thorough investigation for future research.

\section{Numerical experiments}\label{sec:test}
Results in this section illustrate the following key points:
\begin{enumerate}
\item Model~\ref{model:reduced} is more scalable than Model~\ref{model:kparties} for general graphs. 
\item Compared with Model~\ref{model:node-edge}, Model~\ref{model:reduced} is
 quite attractive when the underlying graph is chordal. For general random
 graphs, it is less competitive for branch-and-bound although it provides stronger continuous relaxation bounds. This is mainly due to the exponential number of clique inequalities~\eqref{eq:maxclique2}. 
\item The continuous relaxation of Model~\ref{model:SDP1reduce} is computationally more scalable than that of Model~\ref{model:SDP1} when the underlying graph has structured sparsity. 
\item The continuous relaxation of Model~\ref{model:SDP1reduce} and that of~\ref{model:reduced} do not dominate each other. 
\end{enumerate}

\subsection{Test instances}
To illustrate the above key points, we randomly generate four sets of sparse graphs. The first set includes {\tt band} graph instances, which were used in~\cite{fukuda2001:exploit,nakata2003}.
The other sets are generated by the package {\tt rudy}~\cite{rinaldi1998:rudy}. Similar instances have been used in~\cite{ghaddar2011:branch,de2016:valid} for numerical demonstration.
\begin{itemize}[label={$\bullet$}]
\item {\tt band}: we generate graphs with edges set $E = \bigcurly{\{i, j\} \in
 V \times V: j - i \le \alpha, i < j}$, where $\alpha$ is 1 plus the
 partition parameter $k$, i.e., $\alpha = k +1$.
 The $50\%$ of edge weights are $-1$ and the others are $1$. 


\item {\tt springlass2g}: Eleven instances of a toroidal two dimensional grid with gaussian interactions. The graph has $n = $(rows $\times$ columns) vertices. 
\item {\tt spinglass2pm}: generates a toroidal two-dimensional grid with $\pm1$ weights. The grid has size $n =$ (rows $\times$ columns). The percentage of negative weights is $50 \%$.
 \item {\tt rndgraph}: we generate a random graph of $n$ nodes and density
 10\%. The edge weights are all $1$. 
\end{itemize}

\subsection{Implementation and experiments setup}
Models~\ref{model:kparties},~\ref{model:node-edge},~\ref{model:SDP1},~\ref{model:reduced} and~\ref{model:SDP1reduce}
are encoded in C++. MIP problem instances are solved by CPLEX 12.7~\cite{Cplex} with default settings. 
The SDP relaxation instances are solved by the state-of-the-art solver MOSEK 8~\cite{mosek} with default tolerance settings, which exploits the sparsity for performance and scalability gains. 

The experiments are conducted on a Mac with Inter Core i5 clocked at 2.7 GHz and
with 8 GB of RAM. For a fair computational comparison, CPU time is used for all
computations. A wall-clock time limit of 10 hours was used for all computations.
If no solution is available at solver termination or the solution process is
killed by the solver, (--) is reported. 

\subsection{Analysis of the computational results}
\subsubsection*{Results on Model~\ref{model:kparties} and Model~\ref{model:reduced}}
As remarked in previous sections, both Models~\ref{model:kparties} and~\ref{model:reduced} suffer from a prohibitive number of clique inequalities. Thus we fix $k=3$. 
For each problem instance, we measure the computational performance of each
formulation by computational time and optimality gap (if it has). 
All Computational time is measure by the CPU time in seconds. 
As all relaxations lead to a lower bound of the optimum, we quantify optimality gap as
\begin{align*}
 \textrm{gap} = \frac{\textrm{optimum - lower bound}}{ \textrm{optimum}} \times 100
\end{align*}

The numerical results are presented in Tables~\ref{tab:spinglass}. For each problem instance, the statistics on root node relaxation and the full branch-and-bound procedure are reported. These tested cases illustrate the following key points.
\begin{enumerate}
\item The compact Model~\ref{model:reduced} is remarkably more scalable for all tested instances than Model~\ref{model:kparties}. 
\item The continuous relaxations of both models are strong. It is also worth mentioning that the optimality gaps at root node for Model~\ref{model:reduced} are nearly the same with the original model~\ref{model:kparties}.
\item For problem instances with over 100 vertices, the computational time for Model~\ref{model:reduced} grows exponentially as the problem size increases. This is probably due to the fact that the size of each clique in $\K$ becomes larger, leading to an exponential number of clique inequalities~\eqref{eq:maxclique2}.
\end{enumerate}
\begin{table}[h!]
 \begin{center}
 	\def\arraystretch{1.1}
	 {
 \caption{Computational evaluation for Model~\ref{model:kparties} and Model~\ref{model:reduced} with $k$ = $3$}\label{tab:spinglass}
 \begin{tabular}{lrrrr|rrrr}
\toprule
&\multicolumn{4}{c}{Model~\ref{model:kparties}}&\multicolumn{4}{c}{Model~\ref{model:reduced}}\\
\midrule
&\multicolumn{2}{c}{Root
node}&\multicolumn{2}{c}{Branch-and-bound}&\multicolumn{2}{c}{Root node}&\multicolumn{2}{c}{Branch-and-bound} \\
\noalign{\smallskip}
\midrule
\multicolumn{9}{c}{{\tt spinglass2g}} \\
\midrule
($k, \abs{V}$) & Time (s) & Gap (\%) & Time (s) & Nodes & Time (s) & Gap (\%) & Time (s) & Nodes \\
\midrule
($3, 3\times 3$) & 0.03 & 0.00 & 0.03 & 0 & 0.03 & 0.00 & 0.03 & 0 \\
($3, 4\times 4$) & 0.20 & 0.00 & 0.34 & 0 & 0.02 & 0.00 & 0.38 & 0 \\
($3, 5\times 5$) & 1.42 & 0.00 & 5.54 & 0 & 0.09 & 0.17 & 0.16 & 0 \\
($3, 6\times 6$) & 8.12 & 0.00 & 9.69 & 0 & 0.17 & 0.00 & 0.30 & 0 \\
($3, 7\times 7$) & 25.32 & 0.48 & 14075.4 & 4 & 0.76 & 0.98 & 10.80 & 0 \\
($3, 8\times 8$) & 127.13 & 0.00 & - & - & 2.05 & 0.00 & 2.52 & 0 \\
($3, 9\times 9$) & - & - & - & - & 4.66 & 0.91 & 40.98 & 0 \\
($3, 10\times 10$) & - & - & - & - & 6.19 & 0.00 & 6.74 & 0 \\
($3, 11\times 11$) & - & - & - & - & 5.00 & 0.00 & 46.11 & 0 \\
($3, 12\times 12$) & - & - & - & - & 7.86 & 0.00 & 9.82 & 0 \\
($3, 13\times 13$) & - & - & - & - & 14.02 & 0.00 & 229.40 & 0 \\
($3, 14\times 14$) & - & - & - & - & 16.12 & 0.00 & 23.13 & 0 \\
($3, 15\times 15$) & - & - & - & - & 19.57 & 0.03 & 1420.8 & 0 \\
($3, 16\times 16$) & - & - & - & - & 16.01 & 0.18 & 58900 & 28 \\
\midrule
\multicolumn{9}{c}{{\tt spinglass2pm}} \\
\midrule
($3, 3\times 3$) & 0.02 & 0.00 & 0.05 & 0 & 0.03 & 0.00 & 0.04 & 0 \\
($3, 4\times 4$) & 0.20 & 0.00 & 0.26 & 0 & 0.05 & 0.00 & 0.06 & 0 \\
($3, 5\times 5$) & 1.72 & 0.00 & 1.93 & 0 & 0.09 & 0.00 & 0.12 & 0 \\
($3, 6\times 6$) & 6.89 & 2.20 & 8.89 & 0 & 0.21 & 2.31 & 0.42 & 0 \\
($3, 7\times 7$) & 26.53 & 2.00 & 43.88 & 4 & 0.77 & 2.10 & 6.96 & 0 \\
($3, 8\times 8$) & 81.55 & 0.00 & - & - & 2.05 & 0.00 & 4.97 & 0 \\
($3, 9\times 9$) & & - & - & - & 2.03 & 1.21 & 31.52 & 0 \\
($3, 10\times 10$) & & - & - & - & 2.78 & 0.16 & 21.11 & 0 \\
($3, 11\times 11$) & & - & - & - & 4.28 & 0.00 & 6152.6 & 28 \\
($3, 12\times 12$) & & - & - & - & 6.73 & 0.00 & 18.27 & 0\\
($3, 13\times 13$) & & - & - & - & 9.61 & 1.08 & 859.37 & 0 \\
($3, 14\times 14$) & & - & - & - & 12.63 & 1.18 & - & -\\
\bottomrule
\end{tabular}
}
\end{center}
\end{table}
\subsubsection*{Results on Model~\ref{model:reduced} and Model~\ref{model:node-edge}}
Let us now compare the results of Models~\ref{model:reduced},~\ref{model:node-edge} presented in
Table~\ref{tab:compare_node-edge}. First, for {\tt band} instances, the
performance of Model~\ref{model:reduced} is significantly better than
Model~\ref{model:node-edge}. This is largely because Model~\ref{model:reduced}
has much less binary variables and constraints due the small sizes of its maximal clique sets.
Second, for {\tt spinglass2g} problem instances, the performance of
Models~\ref{model:reduced} and~\ref{model:node-edge} are similar. When $k$ or
the sizes of instances get larger, Model~\ref{model:reduced} is less attractive
than Model~\ref{model:node-edge}. This is probably because the sizes of the maximal clique sets are large, causing a great number of clique inequalities.
Third, the strength of Model~\ref{model:reduced} is generally much stronger than that
of~\ref{model:node-edge}. This is illustrated by instances of~{\tt spinglass2g}, where the continuous relaxation of Model~\ref{model:reduced} leads to $0$ optimality gap. 

\begin{table}[h!]
 \begin{center}
 	\def\arraystretch{1.1}
	 {
 \caption{Continuous relaxations of Model~\ref{model:node-edge} and Model~\ref{model:reduced}}
 \label{tab:compare_node-edge}
 \begin{tabular}{lrrrr|rrrr}
\toprule
&\multicolumn{4}{c}{Model~\ref{model:node-edge}}&\multicolumn{4}{c}{Model~\ref{model:reduced}}\\
\midrule
&\multicolumn{2}{c}{Root
node}&\multicolumn{2}{c}{Branch-and-bound}&\multicolumn{2}{c}{Root node}&\multicolumn{2}{c}{Branch-and-bound} \\
\midrule
\noalign{\smallskip}
($k, \abs{V}$) & Time (s) & Gap (\%) & Time (s) & Nodes & Time (s) & Gap (\%) & Time (s) & Nodes \\
\midrule
\multicolumn{9}{c}{{\tt band}} \\
\midrule
($3, 50$) & 0.09 & 100.00 & 1588.65 & 345246 & 0.03 & 5.38 & 0.28 & 0 \\
($3, 100$) & 0.25 & 100.00 & -  & -  & 0.04 & 5.47 & 0.76 & 0 \\
($3, 150$) & 0.51 & 98.64 & -  & -  & 0.06 & 4.78 & 1.58 & 0 \\
($3, 200$) & 0.35 & 100.00 & -  & -  & 0.07 & 5.51 & 5.14 & 0 \\
($3, 250$) & 0.47 & 100.00 & -  & -   & 0.08 & 5.52 & 5.61  & 0 \\
($4, 50$) & 0.09 & 107.14& -    & -   & 0.04 & 12.21 & 4.86  & 0 \\
($4, 100$) & 0.25 & 111.40 & -    & -   & 0.08 & 13.39 & 13.05 & 0 \\
($4, 150$) & 0.38 & 112.79 & -    & -   & 0.09 & 13.78 & 36.36 & 0 \\
($4, 200$) & 0.35 & 112.55 & -    & -   & 0.16 & 13.59 & 70.74 & 122 \\
($4, 250$) & 0.44 & 113.14 & -    & -   & 0.21 & 13.78 & 112.85 & 254 \\
\midrule
\multicolumn{9}{c}{{\tt spinglass2g}} \\
\midrule
($3, 10\times 10$) & 0.06 & 3.79 & 10.28 & 58  & 6.19  & 0.00 & 6.74  & 0 \\
($3, 11\times 11$) & 0.07 & 8.5  & 17.63 & 145  & 5.00  & 0.00 & 46.11 & 0 \\
($3, 12\times 12$) & 0.09 & 10.61 & 16.00 & 89  & 7.86  & 0.00 & 9.82  & 0 \\
($3, 13\times 13$) & 0.11 & 9.41 & 34.01 & 584  & 14.02 & 0.00 & 229.40 & 0 \\
($3, 14\times 14$) & 0.13 & 9.01 & 67.20 & 2126 & 16.12 & 0.00 & 23.13 & 0 \\
($3, 15\times 15$) & 0.15 & 10.01 & 77.30 & 860  & 19.57 & 0.03 & 1420.8 & 0 \\
($4, 10\times 10$) & 0.07 & 10.38 & 27.00 & 3075 & 6.99 & 0.00 & 22.70 & 0 \\
($4, 11\times 11$) & 0.09 & 7.50 & 30.50 & 3654 & 15.212 & 0.00 & 50.47 & 0 \\
($4, 12\times 12$) & 0.11 & 9.50 & 75.40 & 3068 & 24.696 & 0.00 & 144.87 & 0 \\
($4, 13\times 13$) & 0.14 & 9.19 & 79.03 & 4211 & 76.670 & 0.00 & -   & - \\
($4, 14\times 14$) & 0.17 & 8.10 & 42.68 & 882  & 76.940 & 0.00 & -   & - \\
($4, 15\times 15$) & 0.34 & 7.20 & 430.39 & 11314 & 279.98 & 0.00 & -   & - \\
\bottomrule
\end{tabular}
}
\end{center}
\end{table}

\subsubsection*{Results on Model~\ref{model:SDP1} and Model~\ref{model:SDP1reduce}}
We now compare the performance of Model~\ref{model:SDP1} and Model~\ref{model:SDP1reduce} with respect to their continuous relaxation values
and solution time. Since the problem is a minimisation problem, the higher the value is, the stronger is the lower bound. 
The numerical results are summarized in Table~\ref{tab:sdpcompare}.

Overall, we remark that that the continuous relaxation of Model~\ref{model:SDP1reduce} reduces significantly the computational time compared with that of Model~\ref{model:SDP1}, though a bit inferior in the solution quality. In addition, as expected, the solution time for Model~\ref{model:SDP1reduce} grows linearly with respect to the size of the graph while the computational time for Model~\ref{model:SDP1} grows more significantly as the instance size increases.

\begin{table}[h!]
 \begin{center}
 	\def\arraystretch{1.1}
	 {
 \caption{Continuous relaxations for Model~\ref{model:SDP1} and
 Model~\ref{model:SDP1reduce} }\label{tab:sdpcompare}
 \begin{tabular}{lrrr|rr}
\toprule
\multicolumn{6}{c}{\tt spinglass2g} \\
\midrule
&\multicolumn{2}{c}{Model~\ref{model:SDP1}}&&\multicolumn{2}{c}{Model~\ref{model:SDP1reduce}}\\
\midrule
($k, \abs{V}$) & Time (s) & Value&& Time (s) & Value \\
\midrule
($3, 11\times 11$) & 125.84 & -8.41227e+06 & & 1.74 & -8.42335e+06 \\
($3, 12\times 12$) & 359.43 & -1.06568e+07 & & 2.29 & -1.06777e+07 \\
($3, 13\times 13$) & 866.25 & -1.20763e+07 & & 4.00 & -1.2088e+07 \\
($3, 14\times 14$) & 2215.69 & -1.4147e+07 & & 5.64 & -1.416e+07  \\
($3, 15\times 15$) & 5072.07 & -1.80742e+07 & & 10.33 & -1.80904e+07 \\
($4, 11\times 11$) & 130.806 & -8.44258e+06 & & 1.88 & -8.45242e+06 \\
($4, 12\times 12$) & 343.846 & -1.07031e+07 & & 2.43 & -1.07238e+07 \\
($4, 13\times 13$) & 1164.69 & -1.2127e+07 & & 4.34 & -1.214e+07  \\
($4, 14\times 14$) & 2820.33 & -1.42114e+07 & & 5.59 & -1.42234e+07 \\
($4, 15\times 15$) & 6951.17 & -1.81605e+07 & & 10.88 & -1.81734e+07 \\
\midrule
\multicolumn{6}{c}{\tt spinglass2pm} \\
\midrule
($3, 11\times 11$) & 876.87  & -107.67 & & 1.21 & -108.02 \\
($3, 12\times 12$) & 2227.27 & -129.40 & & 1.66 & -129.83\\
($3, 13\times 13$) & 5500.92 & -150.58 & & 2.54 & -151.24\\
($3, 14\times 14$) & 11349.41 & -175.72 & & 3.37 & -176.41\\
($3, 15\times 15$) & 33188.43 & -197.88 & & 5.67 & -198.66 \\
($4, 11\times 11$) & 102.62  & -108.70 & & 1.26 & -109.16 \\
($4, 12\times 12$) & 291.59  & -130.23 & & 1.73 & -130.64\\
($4, 13\times 13$) & 729.29  & -152.08 & & 2.80 & -152.68\\
($4, 14\times 14$) & 1905.11 & -177.43 & & 3.56 & -178.11\\
($4, 15\times 15$) & 4791.15 & -199.90 & & 6.67 & -200.67 \\
\bottomrule
\end{tabular}
}
\end{center}
\end{table}

\subsubsection*{Results on Model~\ref{model:reduced} and Model~\ref{model:SDP1reduce}}
We compare the strength of Model~\ref{model:reduced} and
Model~\ref{model:SDP1reduce} with respect to their continuous relaxation values
and solution time. Again the problem is a minimization problem, the higher the
value is, the stronger is the lower bound. 

Our previous numerical results show that the computational time and bounds of
both Model~\ref{model:reduced} and Model~\ref{model:SDP1reduce} are quite
similar for test instances of {\tt band, spinglass2g, spinglass2pm}. To contrast
these two compact models, we present the numerical results for tests instances
of {\tt rndgraph}.  The numerical results are summarized in
Table~\ref{tab:compare_reduced}. We outline the following key observations. 
\begin{enumerate}
\item The strength of the continuous relaxation of Model~\ref{model:SDP1reduce}
    neither dominates nor is dominated by that of~\ref{model:reduced}. See, for
    instance, problem instance $(3, 40)$ and $(3, 60)$. 
\item For larger problem instances ($\abs{V} \ge 100$),
    Model~\ref{model:SDP1reduce} appear much more attractive than
    Model~\ref{model:reduced} in terms of computational scalability and bound
    quality. 
\end{enumerate}

\begin{table}[h!]
 \begin{center}
 	\def\arraystretch{1.1}
	 {
 \caption{Continuous relaxations of Model~\ref{model:reduced} and
 Model~\ref{model:SDP1reduce} with $k=3$ ({\tt rndgraph}).}\label{tab:compare_reduced}
 \begin{tabular}{lrr|rrr}
\toprule
&\multicolumn{2}{c}{Model~\ref{model:reduced}}&\multicolumn{2}{c}{Model~\ref{model:SDP1reduce}}\\
\midrule
\noalign{\smallskip}
($k, \abs{V}$) & Time (s) & Value&& Time (s) & Value \\
\midrule
($3, 10$) & 0.01  & 0.00 & & 0.06 & 0.00 \\
($3, 20$) & 0.00  & 0.00 & & 0.03 & 0.00 \\
($3, 30$) & 0.01  & 0.00 & & 0.03 & 0.00 \\
($3, 40$) & 0.07  & 0.67 & & 0.05 & 0.14 \\
($3, 50$) & 0.61  & 0.00 & & 0.12 & 0.00 \\
($3, 60$) & 1.97  & 1.11 & & 0.25 & 0.41 \\
($3, 70$) & 3.75  & 4.01 & & 0.49 & 2.53 \\
($3, 80$) & 12.44 & 4.94 & & 1.09 & 3.47 \\
($3, 90$) & 24.46 & 10.96 & & 1.69 & 9.86 \\
($3, 100$) & 49.76 & 14.67 & & 3.73 & 16.45 \\
($3, 110$) & 85.13 & 23.53 & & 5.62 & 26.11 \\
($3, 120$) & 209.22 & 28.86 & & 12.21 & 35.53 \\
($3, 130$) & 326.67 & 37.00 & & 21.07 & 48.57 \\
($3, 140$) & -   & -   & & 34.40 & 61.60 \\
($3, 150$) & -   & -   & & 48.33 & 78.47 \\
\bottomrule
\end{tabular}
}
\end{center}
\end{table}
\section{Conclusion}\label{sec:conclude}
This work introduces two compact reformulations of the minimum $k$-partition
problem exploiting the structured sparsity of the underlying graph. The first
model is a binary linear program while the second is an integer semidefinite
program. Both are based on the maximal clique set corresponding to the chordal
extension of the original graph. Numerical results show that the proposed models
numerically dominate state-of-the-art formulations.

Based on the results presented in this paper, several research directions can be
considered. First, alternative algorithms may be implemented to obtain optimal
clique sets minimizing the number of integer variables. Second, separation
algorithms for valid inequalities can be investigated. Third, specialized
branch-and-bound algorithms for the compact SDP Model~\ref{model:SDP1reduce} can
be considered. Lastly, combining Models~\ref{model:reduced}
and~\ref{model:node-edge} to obtain a novel integer LP formulation is also left for future research. 

\begin{acknowledgement}
This research was partly funded by the Australia Indonesia Centre.
\end{acknowledgement}
\bibliographystyle{spmpsci} 
\bibliography{mink}
\end{document}